\definecolor{darkgrey}{RGB}{100,100,100}
\definecolor{darkblue}{RGB}{0,0,100}
\definecolor{darkgreen}{RGB}{0,100,0}
\definecolor{friendlyred}{RGB}{232,27,0}
\definecolor{mypink}{RGB}{215, 5, 234}
  \tikzset{mylabel/.style  args={at #1 #2  with #3}{
    postaction={decorate,
    decoration={
      markings,
      mark= at position #1
      with  \node [#2] {#3};
 } } } }
\let\@fnsymbol\@arabic
\theoremstyle{plain}
\newtheorem{theorem}{\bf Theorem}[section]
\newtheorem{conjecture}{Conjecture}
\newtheorem{corollary}[theorem]{Corollary}
\newtheorem{example1}[theorem]{\bf Example}
\newtheorem{lemma}[theorem]{Lemma}
\newtheorem{proposition}[theorem]{Proposition}
\theoremstyle{definition}
\newtheorem{remark}[theorem]{Remark}
\newtheorem{definition}[theorem]{Definition}
\newtheorem*{theorem*}{\bf Theorem}
\theoremstyle{remark}
\newtheorem{claim}{Claim}
\definecolor{mypink}{RGB}{215, 5, 234}
\newcommand{\alF}{F'}
\newcommand{\bfx}{\ensuremath{\mathbf{x}}}
\newcommand{\codim}{\operatorname{codim} }
\newcommand{\CX}{\ensuremath{\mathcal{X}}}
\newcommand{\D}{\Delta}
\newcommand{\init}{\operatorname{in} }
\newcommand{\Jac}{\operatorname{Jac}}
\newcommand{\KK}{\ensuremath{\mathbb{K}}}
\newcommand{\la}{\ensuremath{\lambda}}
\newcommand{\link}{\operatorname{link}}
\newcommand{\mm}{\ensuremath{\mathfrak{m}}}
\newcommand{\NN}{\ensuremath{\mathbb{N}}}
\newcommand{\PP}{\ensuremath{\mathbb{P}}}
\newcommand{\Proj}{\operatorname{Proj} }
\newcommand{\rank}{\operatorname{rank}}
\newcommand{\RR}{\ensuremath{\mathbb{R}}}
\newcommand{\Spec}{\operatorname{Spec} }
\newcommand{\supp}{\operatorname{supp}}
\newcommand{\ZZ}{\ensuremath{\mathbb{Z}}}
\begin{document}
\title{Singularities and radical initial ideals}
\author{Alexandru Constantinescu}
\email{aconstant@zedat.fu-berlin.de}
\address{Institute of Mathematics, Freie Universit\"at Berlin, Germany}
\author{Emanuela De Negri}
\email{denegri@dima.unige.it}
\address{Dipartimento di Matematica, Universit\'a di Genova, Italy}
\author{Matteo Varbaro} 
\email{varbaro@dima.unige.it}
\address{Dipartimento di Matematica, Universit\'a di Genova, Italy} 
  \date{}
 \keywords{Gr\"obner deformations, smoothings, acyclic simplicial complexes, Cohen-Macaulay, rational singularities}
  \subjclass[2010]{13P10, 13F55, 05E40, 14B07, 13A35 , 13F50}
\maketitle
\begin{abstract}
What kind of reduced monomial schemes can be obtained as a Gröbner degeneration of a smooth projective variety?
  Our conjectured answer is: only Stanley-Reisner schemes associated to acyclic Cohen-Macaulay simplicial complexes. 
  This would imply, in particular, that only curves of genus zero have such a degeneration.
  We prove this conjecture for degrevlex orders, for elliptic curves over real number fields, for  boundaries of cross-polytopes, and for leafless graphs. We discuss consequences for rational and F-rational singularities of  algebras with straightening laws. 
 \end{abstract}

\section{Introduction}
A deformation of a scheme $X$ is a flat family $\CX\longrightarrow T$, over some connected  affine scheme $T$, whose special fibre is $X$.
Passing from an ideal $I$ of the polynomial ring $\KK[x_1,\dots,x_n]$ to its initial ideal gives rise to a flat family over the parameter space $\Spec \KK[t]$ in which the special fibre (the one over $t=0$) corresponds to the initial ideal. 
We will call such a family a \emph{Gröbner deformation}. 
When the generic fibre of the family is smooth, we call such a deformation a (Gröbner) \emph{smoothing}.
We focus our attention to the situation in which the special fibre is a Stanley-Reisner scheme. This means that $X$ is defined by a square-free monomial ideal $I_\D$ corresponding to some simplicial complex $\D$. 
Specifically, we are looking at the interplay between the type of  singularities (or the lack of such)  in the generic fibre and the topology of  the simplicial complex associated to the special fibre.
We have two reasons for doing this.
On the one hand, having a square-free initial ideal is a desirable property which better preserves homological invariants under degeneration \cite{CV18}. For example, extremal Betti numbers stay constant (in value and position), and thus also depth and regularity, so Cohen-Macaulayness is passed on.
Radical initial ideals appear also in toric settings, where, in certain cases, they  admit a description in terms of unimodular triangulations of lattice polytopes \cite{Stu96,BGT97,BR07,HP09}.
On the other hand, by upper semi-continuity, smoothings (when possible) of  Stanley-Reisner schemes associated to combinatorial manifolds produce important algebraic varieties: spheres smoothen to Calabi-Yau's, tori to Abelian varieties, and triangulations of $\RR\PP^2$ smoothen to Enriques surfaces. Studies in this direction have been done in \cite{AC04,AC10,Chr11,CI14,IT17}.

Throughout  this introduction, the polynomial ring $\KK[x_1,\ldots ,x_n]$ will always be equipped with the standard $\ZZ$-grading given by $\deg x_i=1$. The two reasons  presented above are reflected by two approaches.
In the first one, where the generic fibre is fixed, we start our investigation by merging  two similar questions, \cite[Problem 3.6]{Var18} and \cite[Question 4.2]{CV18}, into one conjecture.
\begin{conjecture}\label{conj1}
Let $I\subset \KK[x_1,\ldots ,x_n]$ be a homogeneous prime ideal defining a nonsingular projective variety such that $\init(I)$ is square-free for some monomial order. Then $\KK[x_1,\ldots ,x_n]/\init(I)$ is Cohen-Macaulay.
\end{conjecture}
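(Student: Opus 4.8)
The plan is first to peel the conjecture down to a question about the generic fibre, and then to handle that question by a dimension induction when the order is well behaved. By the square-free Gröbner degeneration results of \cite{CV18}, when $\init(I)$ is square-free the extremal Betti numbers of $\KK[x_1,\dots,x_n]/I$ and of $\KK[x_1,\dots,x_n]/\init(I)$ agree in position and value; since Gröbner degeneration preserves the Hilbert function, $\depth$ and $\dim$ agree for both rings, so $\KK[x_1,\dots,x_n]/\init(I)$ is Cohen--Macaulay if and only if $\KK[x_1,\dots,x_n]/I$ is. Thus Conjecture~\ref{conj1} is equivalent to the assertion that a homogeneous prime $I$ which admits a square-free initial ideal and whose projective zero locus $X$ is smooth has $\KK[x_1,\dots,x_n]/I$ Cohen--Macaulay. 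Note that $I$ is prime, so $X$ is irreducible, and $I$ is automatically saturated, a graded domain of positive dimension having no $\mathfrak{m}$-torsion.

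Next I would reformulate Cohen--Macaulayness cohomologically. As $X$ is smooth and $I$ is saturated, $\KK[x_1,\dots,x_n]/I$ is a normal domain with at worst an isolated (vertex) singularity, hence $\depth\KK[x_1,\dots,x_n]/I\ge 2$; writing $d=\dim X$, the standard comparison of the local cohomology of $\KK[x_1,\dots,x_n]/I$ with the sheaf cohomology of the twists $\mathcal{O}_X(j)$ shows that $\KK[x_1,\dots,x_n]/I$ is Cohen--Macaulay if and only if $H^i(X,\mathcal{O}_X(j))=0$ for all $j\in\ZZ$ and all $0<i<d$. For $d\le 1$ this is vacuous, so the conjecture holds for curves and finite smooth schemes, and all the content lies in $d\ge 2$; already for $d=2$ one must rule out, say, square-free initial ideals of an abelian surface, whose coordinate ring fails to be Cohen--Macaulay since $H^1(X,\mathcal{O}_X)\ne 0$. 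The sharper form of the conjecture, predicting that only genus-zero curves admit such a degeneration, asks moreover that the complex $\Delta$ with $I_\Delta=\init(I)$ be acyclic, which is strictly stronger than Cohen--Macaulay.

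For a degree reverse lexicographic order --- the one family of orders for which I would expect the argument to go through --- I would run an induction on $n$, using the compatibility of the order with the smallest variable. The defining property of degrevlex is that $x_n\mid\init_<(f)$ forces $x_n\mid f$; from it one gets $\init_<(I):x_n=\init_<(I:x_n)$, so if $x_n\notin I$ (which we may assume, otherwise $X$ lies in a coordinate hyperplane and we reduce the number of variables) then $x_n$ is a non-zerodivisor on both $\KK[x_1,\dots,x_n]/I$ and $\KK[x_1,\dots,x_n]/\init_<(I)$. The second statement means $\Delta$ is a cone over the vertex $n$, so $\KK[x_1,\dots,x_n]/\init_<(I)$ is Cohen--Macaulay precisely when the Stanley--Reisner ring of its link, namely $\KK[x_1,\dots,x_{n-1}]/\init_{<'}(\bar I)$ with $\bar I$ the reduction of $I$ modulo $x_n$, is Cohen--Macaulay; and $\bar I$ again carries a square-free initial ideal. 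One then iterates, descending to $d\le 1$. The delicate point is that the section $\bar I$ need not be prime and its projective scheme need not be smooth, so the induction must be run with an invariant weaker than smoothness which is both implied by smoothness of $X$ (through the normality of $\KK[x_1,\dots,x_n]/I$) and preserved under these coordinate-hyperplane sections, still forcing Cohen--Macaulayness at the base.

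The main obstacle to the general case is that for an arbitrary monomial order none of this survives: $x_n$ need not be a non-zerodivisor on $\KK[x_1,\dots,x_n]/\init(I)$, so $\Delta$ need not be a cone, and there is no geometric hyperplane-section procedure at all, since a Bertini-generic hyperplane is not a coordinate hyperplane and a change of coordinates generically destroys the square-free initial ideal. One is then forced to control the homology of $\Delta$ head-on, which is where the real difficulty lies. The cases that can be proved unconditionally --- elliptic curves over real number fields, boundaries of cross-polytopes, and leafless graphs --- are established by arithmetic and combinatorial arguments specific to each situation, while the conjecture for a general monomial order (and $d\ge 2$) remains open.
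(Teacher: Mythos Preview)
The statement is a \emph{conjecture}; the paper does not prove it in full, only for degrevlex orders (Corollary~\ref{c:degrevlex}) and in a few special cases for arbitrary orders. Your write-up correctly reflects this: you reduce via \cite{CV18} to asking whether $S/I$ itself is Cohen--Macaulay, dispose of $\dim S/I\le 2$ by normality, and then attempt only the degrevlex case, conceding that the general case is open. So the right comparison is between your degrevlex sketch and the paper's degrevlex proof.

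You share with the paper the crucial observation that for degrevlex (more generally, for an $x_n$-addicted order) with $x_n\notin I$, the variable $x_n$ divides no minimal generator of $\init(I)$, so $\Delta$ is a cone with apex $n$. Where you diverge is in how this is exploited. You try to pass to $\bar I=I+(x_n)/(x_n)$ and run an induction on $n$ (equivalently on $d$), and you yourself flag the gap: $\bar I$ need not be prime and $\Proj S/\bar I$ need not be smooth, so you would need a weaker invariant stable under this hyperplane section. The difficulty is real. Generalised Cohen--Macaulayness is the natural candidate, but it is \emph{not} in general preserved under quotienting by a regular element, so the induction you outline does not close even after weakening the hypothesis.

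The paper's argument avoids this induction entirely. From smoothness of $X$ one gets that $S/I$ is generalised Cohen--Macaulay; by Theorem~\ref{CV}(iii) the same holds for $S/\init(I)$, i.e.\ $\KK[\Delta]$ is Buchsbaum. Hochster's formula then forces $H^i_{\mm}(\KK[\Delta])=H^i_{\mm}(\KK[\Delta])_0\cong\widetilde H^{i-1}(\Delta;\KK)$ for all $i\ge 1$. Since $\Delta$ is a cone it is contractible, so every $\widetilde H^{i-1}(\Delta;\KK)$ vanishes, and $\KK[\Delta]$ is Cohen--Macaulay (indeed with negative $a$-invariant) in one stroke. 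The only induction in the paper is the trivial one removing $x_n$ when $x_n\in I$. Thus the missing idea in your proposal is precisely this: rather than cutting by $x_n$ on the $S/I$ side and trying to preserve a geometric hypothesis, transfer generalised Cohen--Macaulayness once to $\KK[\Delta]$ and finish with Hochster's formula and the contractibility of the cone.
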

Problem 3.6 in \cite{Var18} asks for a counter example of the above conjecture, while Question 4.2 in \cite{CV18} asks if a stronger form of the conjecture is true, namely it asks if the following  holds.
\begin{conjecture}\label{conj2}
Let $I\subset \KK[x_1,\ldots ,x_n]$ be a homogeneous prime ideal defining a nonsingular projective variety such that $\init(I)$ is square-free for some monomial order. Then $\KK[x_1,\ldots ,x_n]/\init(I)$ is Cohen-Macaulay with negative $a$-invariant.
\end{conjecture}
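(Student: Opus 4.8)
This is Conjecture~\ref{conj2}, which the paper establishes only in special cases; here is the line I would follow, which also explains that split.

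\emph{Step 1: a cohomological reformulation.} Put $R=\KK[x_1,\dots ,x_n]$ and write $\init(I)=I_\D$. Since $\init(I)$ is square-free, \cite{CV18} shows that $R/I$ and $R/\init(I)$ have the same depth and regularity (indeed the same extremal Betti numbers), so Conjecture~\ref{conj1} is equivalent to the arithmetic Cohen--Macaulayness of $X$. Granting Conjecture~\ref{conj1}, both rings are Cohen--Macaulay with the same regularity and dimension, hence $a\big(R/\init(I)\big)=\reg(R/I)-\dim(R/I)=a(R/I)$; and since $H^{d}(X,\mathcal{O}_X(j))\cong H^{d+1}_\mm(R/I)_j$ for $d=\dim X\ge 1$, Conjecture~\ref{conj2} becomes the conjunction of Conjecture~\ref{conj1} and the vanishing $H^{d}(X,\mathcal{O}_X(j))=0$ for all $j\ge 0$, equivalently $H^0(X,\omega_X(j))=0$ for all $j\le 0$. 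Combinatorially, $H^i_\mm(R/I_\D)$ lives in nonpositive $\ZZ$-degrees with $H^{\dim\D+1}_\mm(R/I_\D)_0\cong\widetilde H_{\dim\D}(\D;\KK)$ by Hochster's formula, so, once $R/I_\D$ is Cohen--Macaulay, Reisner's criterion turns the full assertion into: \emph{$\D$ is Cohen--Macaulay and $\KK$-acyclic}.

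\emph{Step 2: the easy direction, and what remains.} Upper semicontinuity of graded local cohomology along the flat Gröbner family gives $\dim_\KK H^i_\mm(R/\init(I))_j\ge\dim_\KK H^i_\mm(R/I)_j$ for all $i,j$. Thus a nonzero $H^{d}(X,\mathcal{O}_X(j_0))$ with $j_0\ge 0$ would already force $a(R/\init(I))\ge 0$, contradicting the conclusion of Conjecture~\ref{conj2}; taking $j_0=0$ this forces $H^{d}(X,\mathcal{O}_X)=0$, hence genus zero for curves, as advertised in the introduction. So the substance is the converse --- that square-freeness of $\init(I)$ is \emph{enough} to force arithmetic Cohen--Macaulayness and these vanishings --- and I would split this exactly as the paper does.

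\emph{Step 3: the degrevlex case, by induction on $\dim X$.} For the degree-reverse-lexicographic order, initial ideals behave well with respect to the smallest variable: the relevant saturations and the quotient by $x_n$ commute with $\init$, and deleting from a square-free monomial ideal the generators divisible by $x_n$ leaves it square-free, so the hypothesis descends to the coordinate section $X\cap\{x_n=0\}$. If that section stays smooth and irreducible one finishes by Bertini and induction, propagating $H^{d}(X,\mathcal{O}_X(j))=0$ $(j\ge 0)$ down to the base case $\dim X=0$, a reduced point, where $\D$ is a single vertex. The main obstacle is that $\{x_n=0\}$ is a fixed, not a general, hyperplane: the plan is to show that square-freeness of $\init(I)$ is itself a transversality condition strong enough to keep the section smooth (or to set up the induction on a weaker hypothesis, stable under coordinate sections, that still forces connectedness of $\D$ and the vanishing). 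In dimension one this amounts to showing that a smooth curve with a square-free degrevlex degeneration has genus zero, which requires controlling the single module $H^2_\mm$ (equivalently $\widetilde H_1(\D)$), not merely the extremal data.

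\emph{Step 4: other orders --- the real obstacle.} For a general monomial order there is no such induction, and one has to exclude, family by family, that a Cohen--Macaulay but non-acyclic complex --- a triangulated sphere, a cycle of $\PP^1$'s, the boundary of a cross-polytope (with $I_\D=(x_1y_1,\dots ,x_ny_n)$), or a leafless graph containing a cycle --- arises as the Stanley--Reisner complex of a square-free Gröbner degeneration of a smooth $X$. Each case needs its own geometry: for elliptic curves over a real number field, real-algebraic (or tropical) input on genus-one curves degenerating to cycles of lines; for boundaries of cross-polytopes, the observation that $\init(I)$ being a complete intersection forces $I$ to be a complete intersection of $n$ quadrics --- a Calabi--Yau --- which is then ruled out directly; for leafless graphs, the combinatorics of the graph together with the curve case. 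I expect the genuinely hard point, throughout, to be the same: there is currently no uniform device converting square-freeness of $\init(I)$ into acyclicity of $\D$, so each class of complexes calls for an ad hoc obstruction, and a proof of the conjecture in full would presumably hinge on finding such a device.
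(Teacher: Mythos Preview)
Your Step~1 reformulation is correct and matches the paper's Conjecture~\ref{conj3}. But your proposed attacks on the special cases diverge substantially from what the paper actually does, and in at least one place carry the very gap you flag.

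\textbf{Degrevlex (your Step~3).} Your hyperplane-section/Bertini induction has exactly the obstacle you name and do not resolve: $\{x_n=0\}$ is a fixed hyperplane, and nothing you have written forces the section to stay smooth or irreducible. The paper sidesteps this completely. Its argument (Theorem~\ref{t:xnaddicted}) is: a degrevlex order is \emph{$x_n$-addicted}, i.e.\ $x_n\mid\init(f)\Rightarrow x_n\mid f$. Since $I$ is prime and (after stripping variables lying in $I$) $x_n\notin I$, the element $x_n$ is $S/I$-regular; so if a minimal generator $u$ of $\init(I)$ were divisible by $x_n$, the $f\in I$ with $\init(f)=u$ would factor as $f=x_ng$ with $g\in I$, contradicting minimality of $u$. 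Hence $x_n$ lies in every facet of $\D$, i.e.\ $\D$ is a cone and therefore contractible. Combined with generalised Cohen--Macaulayness of $S/I$ (this is all that is used of smoothness), Theorem~\ref{CV}(iii) and Hochster's formula finish the job in one stroke: no induction on $\dim X$, no Bertini, and a strictly weaker hypothesis than yours (cf.\ Remark~\ref{rem:ForDRLconjecturesAreStronger}).

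\textbf{Elliptic curves (your Step~4).} The paper uses neither real-algebraic nor tropical input. It is arithmetic: by Elkies' theorem an elliptic curve over $\mathbb{Q}$ (or a real number field) has infinitely many supersingular primes $p$, so Frobenius fails to be injective on $H^1(E_p,\mathcal{O}_{E_p})\cong H^2_{\mm}(S_p/I_p)_0$ and $S_p/I_p$ is not $F$-injective for those $p$. But a square-free $\init(I)$ makes $S_p/\init(I_p)$ Cohen--Macaulay and $F$-injective for $p\gg 0$, which transfers to $S_p/I_p$ by \cite{CH97}---contradiction. This explains why the result is confined to real number fields (Remark~\ref{rem:extendToNumberfields}); your sketch would not.

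\textbf{Cross-polytopes and leafless graphs.} The paper does not argue via ``$I$ is a Calabi--Yau complete intersection, ruled out directly'' or via ``the curve case''. In both settings it writes down a reduced Gr\"obner basis $\{f_F\}$ and, by a careful support analysis governed by the monomial order (Lemma~\ref{lem:ciGBSupport}, Lemma~\ref{lem:x1x2notInSupport}, Remark~\ref{rem:deg3gens}), shows that the coordinate point $[1:0:\cdots:0]$ corresponding to the largest variable is always a singular point of $\Proj S/I$: the Jacobian there has rank at most $n-3$. No global geometry enters.

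In summary: your cohomological reframing is fine, but your degrevlex plan has an unresolved transversality gap that the paper's cone-point argument avoids entirely, and for the remaining cases the mechanisms the paper employs---supersingular reduction and explicit Jacobian rank bounds at a coordinate point---are quite different from the ones you propose.
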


 Conjecture \ref{conj2} is still open for projective curves, in which case it would imply that a nonsingular projective curve with a square-free Gröbner degeneration is forced to have genus zero.

Further motivation  for this approach comes from  algebras with straightening laws (ASL).
These are \KK-algebras whose generators and relations are governed by a finite poset.
All ASL have a discrete counterpart given by a square-free monomial ideal, which can be obtained by a Gröbner degeneration under a degrevlex monomial order.
In particular, all ASL fit our setting.
In \cite{Eis80} and \cite{DEP82} De Concini, Eisenbud and Procesi expressed the feeling that a graded ASL $R$ over a field of characteristic zero should have rational singularities as soon as it is a domain.
It was quickly realised that this cannot be true in full generality, since there exist graded ASL which are nonnormal domains \cite{HW85}.
In unpublished work (mentioned in \cite[p. 11]{DEP82}), Buchweitz proved that  in characteristic zero  an ASL  has rational singularities provided that $R$ has rational singularities on the punctured spectrum and that the discrete part of $R$ (and thus $R$ itself) is Cohen-Macaulay.
 We are able to infer Buchweitz theorem without the hypothesis that the discrete part of $R$ is Cohen-Macaulay, which we prove is a consequence of $R$ having rational singularities on the punctured spectrum (see Corollary \ref{c:asl}).

Conjecture \ref{conj2} trivially holds for hypersurfaces by the following simple, but intriguing, fact. If $f\in \KK[x_1,\ldots ,x_n]$ is a homogeneous polynomial of degree $n$ defining a nonsingular projective hypersurface, then $\init(f)$ cannot be a square-free monomial for any monomial order.
In our second approach, where we fix a Stanley-Reisner ideal, the above fact translates to ``the boundary of a simplex is not Gröbner-smoothable''.  Smoothings do not always exist, but when they exist, how do they look like? Finding explicit equations, even in seemingly simple cases as $n$-cycles,  is  a hard problem. We focus our search to Gröbner smoothings, and start by looking at complete intersections. In this case, smoothings always exist and are obtained by unobstructed generic lifting of first order deformations. It turns out that, just as in the case of hypersurfaces, complete intersections using all the variables are never Gröbner-smoothable (Corollary \ref{cor:ciAreNeverGrobSmoothable}). It is our general feeling that Gröbner smoothings of Stanley-Reisner schemes are in fact rare. For example,  they do not exist for any combinatorial manifold if the order is lexicographic (Proposition \ref{prop:noGroebnerLexSmoothings}).
Another reason for looking at these special deformations is to identify which  deformations can be Gröbner. To our knowledge  there is no general method to determine if a given abstract deformation is Gröbner.
A valuable tool in this approach is provided by Altmann and Christophersen \cite{AC04}, who give complete characterisations of the first two cotangent functors in homological terms related to the simplicial complex. \\

In Proposition \ref{p:equivalent}  we prove that the conjectures \ref{conj1}  and \ref{conj2} are equivalent, by showing that Conjecture \ref{conj1} in dimension $\le d+1$ implies  Conjecture \ref{conj2} in dimension $\le d$. 
For arbitrary monomial orders we know Conjecture \ref{conj1}  for projective curves: this follows by \cite{KS95} if the base field is algebraically closed, and by \cite{Var09} in general (see Proposition \ref{p:necessary}). In Section \ref{sec:degr-reverse-lexic} we show Conjecture \ref{conj2} when the monomial order is a degree reverse lexicographic one (see Corollary \ref{c:degrevlex}).
In the last section we study  Gröbner deformations which are not necessarily associated to degrevlex orders.  Conjectures  \ref{conj1} and \ref{conj2} are equivalent to: A simplicial complex  which is Gröbner-smoothable over $\KK$ must be Cohen-Macaulay and acyclic over $\KK$. We prove the following.
\begin{enumerate}[1.]
\item A combinatorial sphere whose Stanley-Reisner ring is a complete intersection (e.g. the boundary of a cross-polytope) is not Gröbner-smoothable over any field (Corollary \ref{cor:ciAreNeverGrobSmoothable}).
\item A graph with exactly one cycle is not Gröbner-smoothable over any real number field (Theorem \ref{t:elliptic}).
\item A Gröbner-smoothable graph (over some field) has leafs (Corollary \ref{cor:leaflessGraphsNotGRSmoothable}).
\item Any simplicial complex that is Gröbner-smoothable for the lexicographic order has some free face (Proposition \ref{prop:noGroebnerLexSmoothings}).
\end{enumerate}

Theorem~\ref{t:elliptic} is a statement on nonsingular projective curves of genus one, and  its proof  relies on a result from number theory by Elkies \cite{Elk87}. The proofs of the other three results  are essentially  combinatorial.\\

{\it Acknowledgements}: We are grateful to  Klaus Altmann and  Jan Christophersen for the many useful hints on deformation theory; to Anurag Singh for valuable discussions on many topics touched by this paper, especially on the behaviour of the singularities of $\Proj R$ as the graded structure on $R$ varies; to Bhargav Bhatt for clarifications around Theorem \ref{t:elliptic}; to  Mitra Koley for pointing at the generalisation of the result of Fedder and Watanabe crucial for the proof of the positive characteristic case of Corollary \ref{c:degrevlex}; to Mateusz Michalek for many experiments on Theorem  \ref{t:elliptic}.
\vskip 1cm

\section{The conjectures}
\label{sec:conjectures}

Let $S$ be the  polynomial ring $\KK[x_1,\dots,x_n]$ over a field $\KK$ equipped with a positive grading (i.e. $\deg x_i>0$ for all $i=1,\ldots ,n$). For a homogeneous ideal $I\subset S$, let $R=S/I$ and denote by $H_\mm^i(R)$ the $i$th local cohomology module of $R$ supported at the unique homogeneous maximal ideal $\mm$ of $R$. The $S$-module $H_\mm^i(R)$ is $\ZZ$-graded; for $j\in\ZZ$, we will denote by $H_\mm^i(R)_j$ its degree $j$ part. 

We recall that $R$ is \emph{Cohen-Macaulay} if and only if $H_\mm^i(R)=0$ for all $i<\dim R$.
The {\it $a$-invariant} of a Cohen-Macaulay positively graded \KK-algebra $R$ is the maximum $j\in\ZZ$ such that $H_\mm^{\dim R}(R)_j\neq 0$.
In  this case, the $a$-invariant of $R$ is negative if and only if $H^{\dim X}(X,\mathcal{O}_X)=0$ (where $X=\Proj R$), so  having negative $a$-invariant depends only on $X$.
The ring $R$ is called {\it generalised Cohen-Macaulay} if and only if $H_\mm^i(R)$ has finite length for any $i<\dim R$.
In positive characteristic, $R$ is called {\it $F$-injective} if the natural Frobenius action on $H^i_{\mm}(R)$ is injective for all $i\in \NN$. For the definitions of rational singularities and $F$-rational singularities we refer to \cite[Chapter 10]{BH93}. 

\begin{remark}
  \label{rem:gCMandPuncturedSpectrum}
A graded, finitely generated $R$-module $M$ has finite length if and only if $\mathrm{Supp}M=\{\mm\}$. Using graded duality, it is not difficult to realise that $R$ is generalised Cohen-Macaulay if and only if $R$ is equidimensional and Cohen-Macaulay on the punctured spectrum $\Spec R\setminus \{\mm\}$. The latter condition is implied by $R$ being an equidimensional isolated singularity, or, in characteristic zero, by $R$ being equidimensional and having rational singularities on the punctured spectrum.
\end{remark}
\begin{remark}
  \label{rem:standardVsNonstandardGrading}
  In the standard graded situation, $R$ is an isolated singularity if and only if $X=\Proj R$ is a nonsingular projective scheme.
In the nonstandard graded polynomial ring one has to be careful:
  For instance, $R$ being an isolated singularity is not equivalent to $\Proj R$ being nonsingular
  (e.g. the weighted projective space $\PP(1,1,2)$ is singular).
\end{remark}

For convenience,  we recall the results of \cite{CV18} which we will most frequently use.

\begin{theorem}[\cite{CV18}]
\label{CV}
Let $I\subset S$ be a homogeneous ideal such that $\init(I)$ is radical for some monomial order. Then
\begin{enumerate}[\upshape (i)]
\item $\dim_K H^i_{\mm}(S/I)_j=\dim_K H^i_{\mm}(S/\init(I))_j$ for all $i,j\in\ZZ$.
\item $S/I$ is Cohen-Macaulay if and only if $S/\init(I)$ is Cohen-Macaulay. 
\item $S/I$ is generalised Cohen-Macaulay if and only if $S/\init(I)$ is Buchsbaum. 
\item $S/I$ satisfies Serre condition $(S_r)$ if and only if $S/\init(I)$ satisfies Serre condition $(S_r)$, for all $r\geq 1$. 
\end{enumerate}
\end{theorem}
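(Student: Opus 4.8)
\emph{Proof strategy.} The substance is part (i); parts (ii)--(iv) will follow from it together with standard facts, so I treat (i) first. The idea is the usual one for Gröbner degenerations: realise $I$ and $\init(I)$ as two fibres of a single flat family and compare local cohomology across it. Fix a weight vector $w\in\NN^n$ with $\init_w(I)=\init(I)=:J$ (this exists for any given $I$ and any monomial order, after refining the order by $w$). Let $R=S[t]$, graded by giving $x_i$ its degree in $S$ and setting $\deg t=0$, and let $A=R/I^{\mathrm{h}}$, where $I^{\mathrm{h}}$ is the homogenisation of $I$ with respect to $w$. Then $A$ is a finitely generated graded $R$-module, free (hence flat) over $\KK[t]$, with $A/tA\cong S/J$, with $A/(t-\lambda)A\cong S/I$ for every $\lambda\neq 0$, with $t$ a non-zero-divisor on $A$, and with $A[t^{-1}]\cong (S/I)\otimes_\KK\KK[t,t^{-1}]$ as graded rings.

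Next apply $H^\bullet_\mm(-)$, with $\mm=(x_1,\dots,x_n)R$, to the short exact sequence $0\to A\xrightarrow{\,t\,}A\to S/J\to 0$. Two structural facts will be used: each graded piece $H^i_\mm(A)_j$ is a finitely generated $\KK[t]$-module (local cohomology with support in $R_+$ of a finitely generated graded module over a Noetherian graded ring with Noetherian degree-zero part has finitely generated pieces), and $H^i_\mm(A)[t^{-1}]\cong H^i_\mm(S/I)\otimes_\KK\KK[t,t^{-1}]$ (local cohomology commutes with the localisation $R\to R[t^{-1}]$). I claim that (i) is equivalent to the assertion that $t$ is a non-zero-divisor on $H^i_\mm(A)$ for every $i$. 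Granting this, the long exact sequence collapses to isomorphisms $H^i_\mm(A)/tH^i_\mm(A)\cong H^i_\mm(S/J)$; since a finitely generated torsion-free $\KK[t]$-module is free, in each degree $j$ we obtain $\dim_\KK H^i_\mm(S/J)_j=\rank_{\KK[t]}H^i_\mm(A)_j=\dim_\KK H^i_\mm(S/I)_j$, which is exactly (i).

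\emph{The main obstacle} is to prove that $t$ is a non-zero-divisor on every $H^i_\mm(A)$, and this is the only step that genuinely uses the hypothesis that $J=\init(I)$ is radical; flatness of $A$ over $\KK[t]$ alone does not suffice, since local cohomology can jump in a flat family even when the total Euler characteristic is constant. Equivalently (by the long exact sequence), one must show that the natural maps $H^i_\mm(A)\to H^i_\mm(S/J)$ are surjective for all $i$; equivalently, via graded local duality over the regular ring $R$, that no associated prime of the finitely generated deficiency modules $\Ext^{\bullet}_R(A,\omega_R)$ of $A$ contains $t$. The plan is to read this off from the special fibre: a relevant prime containing $t$ must lie over $V(I^{\mathrm h}+(t))$, and since $A/tA=S/J$ with $J$ radical this locus is, as a scheme, the reduced Stanley--Reisner scheme $\Spec(S/J)$; the absence of embedded and lower-dimensional associated behaviour forced by reducedness of the special fibre is what excludes such a prime. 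I expect the precise control of $\mathrm{Ass}$ of these modules along $V(t)$ to be the delicate point; everything before and after it is formal.

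Finally, the consequences. For (ii): a Gröbner degeneration preserves the Hilbert function, so $\dim S/I=\dim S/\init(I)$, and a graded module vanishes iff all its graded pieces do, so by (i) $H^i_\mm(S/I)=0\iff H^i_\mm(S/\init(I))=0$, which gives the equivalence of Cohen--Macaulayness. For (iii): a graded module has finite length iff only finitely many of its (finite-dimensional) graded pieces are nonzero, so by (i) $S/I$ is generalised Cohen--Macaulay iff $S/\init(I)$ is; and for the Stanley--Reisner ring $S/\init(I)$, being generalised Cohen--Macaulay is equivalent to being Buchsbaum (Schenzel's criterion for simplicial complexes). For (iv): by (i) the modules $H^i_\mm(S/I)$ and $H^i_\mm(S/\init(I))$ have the same Hilbert function, hence the same dimension of support (for such modules the dimension of the support is detected by the growth of the Hilbert function), and Serre's condition $(S_r)$ on a graded quotient of $S$ is characterised by the inequalities $\dim\mathrm{Supp}\,H^i_\mm(-)\le i-r$ for $i<\dim$; hence $(S_r)$ transfers in both directions.
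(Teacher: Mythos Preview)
The paper does not prove this theorem at all: it is quoted verbatim from \cite{CV18} (Conca--Varbaro) and used as a black box. So there is no ``paper's own proof'' to compare against; the relevant comparison is with the original argument in \cite{CV18}.

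Your outline is the right scaffolding and matches the architecture of the proof in \cite{CV18}: form the one-parameter degeneration $A=R/I^{\mathrm h}$ over $\KK[t]$, apply $H^\bullet_\mm$ to $0\to A\xrightarrow{t}A\to S/J\to 0$, and reduce (i) to the surjectivity of $H^i_\mm(A)\to H^i_\mm(S/J)$ for all $i$. The deductions (ii)--(iv) from (i) are also fine.

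However, there is a genuine gap at exactly the point you flag. Your proposed mechanism---``reducedness of the special fibre forces the absence of embedded/lower-dimensional associated primes of $\Ext^\bullet_R(A,\omega_R)$ along $V(t)$''---is not an argument, and in fact mere reducedness of $S/J$ is not what drives the result. What \cite{CV18} actually uses is that a Stanley--Reisner ring $S/J$ is \emph{cohomologically full}: for any ideal $J'\supseteq J$ with $\sqrt{J'}=J$, the maps $H^i_\mm(S/J')\to H^i_\mm(S/J)$ are surjective. This is proved for square-free monomial ideals via $F$-purity / $F$-splitting in positive characteristic (and a reduction-mod-$p$ or Du~Bois argument in characteristic zero), not by a generic statement about reduced special fibres. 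One then applies this to the thickenings $A/t^kA$, whose radical is $J$, to obtain surjectivity of $H^i_\mm(A/t^kA)\to H^i_\mm(S/J)$ for all $k$, and passes to the limit to get the surjectivity of $H^i_\mm(A)\to H^i_\mm(S/J)$. Without invoking cohomological fullness (or an equivalent input such as $F$-injectivity/Du~Bois for $S/J$), the key step does not go through; there are flat families with reduced special fibre where local cohomology jumps, so your sketch as written does not close the gap.
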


All the above results, even if originally stated for the standard grading, hold true for any positive grading \cite[Remark 2.5]{CV18}.

For the remaining part of this section, the graded structure on $S=\KK[x_1,\ldots ,x_n]$ will be the standard one.

\begin{proposition}\label{p:equivalent}
Conjectures \ref{conj1} and \ref{conj2} are equivalent.
\end{proposition}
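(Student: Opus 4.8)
One direction is trivial: Conjecture~\ref{conj2} asks for Cohen--Macaulayness together with a condition on the $a$-invariant, so it implies Conjecture~\ref{conj1}. For the converse I would in fact prove the sharper statement announced in the introduction: if Conjecture~\ref{conj1} holds for all homogeneous primes defining a nonsingular variety of dimension $\le d+1$, then Conjecture~\ref{conj2} holds for all such primes of dimension $\le d$. So fix a homogeneous prime $I\subset S=\KK[x_1,\dots,x_n]$ with $X=\Proj(S/I)$ nonsingular of dimension $e\le d$, and suppose $\init(I)$ is square-free for some monomial order. Conjecture~\ref{conj1} applied to $I$ (allowed, since $e\le d+1$) together with Theorem~\ref{CV}(ii) shows $S/I$ is Cohen--Macaulay, so by Theorem~\ref{CV}(i) it suffices to prove $a(S/I)<0$. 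If $e=0$ then $X$ is a single reduced point (as $I$ is prime), $S/I$ is a polynomial ring in one variable and $a(S/I)=-1$; so assume $e\ge 1$. Put $A=S/I$ and consider the Segre product $R:=A\#\KK[s,t]$: it is a standard graded domain (a subring of the domain $A\otimes_\KK\KK[s,t]$) with $\Proj R=X\times\PP^1$, and since $X\times\PP^1\to X$ is a smooth morphism onto a regular scheme, $X\times\PP^1$ is nonsingular of dimension $e+1\le d+1$; thus $R=\KK[z_{ij}:1\le i\le n,\ j=1,2]/J$ for a homogeneous prime $J$, and $R$ is the homogeneous coordinate ring of the nonsingular projective variety $X\times\PP^1$ under the Segre embedding.

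The crucial point, whose proof I postpone, is that $R=A\#\KK[s,t]$ has a square-free initial ideal for a suitable monomial order. Granting this, I argue as follows. For any standard graded $\KK$-algebra $T$ with $V=\Proj T$ one has $H^i_{\mm}(T)_j\cong H^{i-1}(V,\CO_V(j))$ for all $i\ge 2$ and all $j\in\ZZ$, where $\CO_V(1)$ is the twisting sheaf of the given embedding. Since $S/I$ is Cohen--Macaulay of dimension $e+1\ge 2$, this gives $a(S/I)=\max\{\,j:H^e(X,\CO_X(j))\ne 0\,\}$, so the assumption $a(S/I)\ge 0$ produces $j_0\ge 0$ with $H^e(X,\CO_X(j_0))\ne 0$. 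Applying the same formula to $R$ (its $\Proj$ carrying the Segre polarization $\CO_X(1)\boxtimes\CO_{\PP^1}(1)$) together with the Künneth formula on $X\times\PP^1$,
\[
H^{e+1}_{\mm}(R)_{j_0}\;\cong\;H^e\bigl(X\times\PP^1,\ \CO_X(j_0)\boxtimes\CO_{\PP^1}(j_0)\bigr)\;\cong\;\bigoplus_{a+b=e}H^a\bigl(X,\CO_X(j_0)\bigr)\otimes_\KK H^b\bigl(\PP^1,\CO_{\PP^1}(j_0)\bigr),
\]
and the summand with $a=e$, $b=0$ equals $H^e(X,\CO_X(j_0))\otimes_\KK H^0(\PP^1,\CO_{\PP^1}(j_0))\ne 0$ because $j_0\ge 0$. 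Hence $H^{e+1}_{\mm}(R)\ne 0$, while $\dim R=e+2$, so $R$ is not Cohen--Macaulay; by the displayed claim and Theorem~\ref{CV}(ii), neither is $\KK[z_{ij}]/\init(J)$, contradicting Conjecture~\ref{conj1} for the nonsingular variety $X\times\PP^1$ of dimension $e+1\le d+1$. Therefore $a(S/I)<0$, and by Theorem~\ref{CV}(i) also $a(S/\init(I))=a(S/I)<0$; together with the Cohen--Macaulayness of $S/\init(I)$ this is Conjecture~\ref{conj2}.

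It remains to settle the claim, which I expect to be the main obstacle: one must show that the square-free Gröbner degeneration of $A$ is inherited by the Segre product. I would pick a weight $w\in\NN^n$ with $\init_w(I)=\init(I)$ and extend it to $\KK[z_{ij}]$ by $w(z_{i1})=w(z_{i2})=w(x_i)$. In the presentation $\KK[z_{ij}]\twoheadrightarrow R$, $z_{i1}\mapsto\bar x_i s$, $z_{i2}\mapsto\bar x_i t$, the kernel $J$ contains the $2\times 2$ minors $z_{i1}z_{j2}-z_{i2}z_{j1}$, which are $w$-homogeneous, and the lifts $f(z_{11},\dots,z_{n1})$ of a reduced Gröbner basis of $I$, for which $\init_w\bigl(f(z_{11},\dots,z_{n1})\bigr)=(\init f)(z_{11},\dots,z_{n1})$ is a square-free monomial. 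Refining $w$ by a diagonal term order to break the ties among the minors, all of these generators acquire square-free leading terms; the content is to check that they already generate the initial ideal, equivalently that the Segre product stays flat along this one-parameter family, so that $R$ degenerates to $(S/\init(I))\#\KK[s,t]$ — the Segre product of a Stanley--Reisner ring with a polynomial ring, which again has a square-free initial ideal — after which composing the two degenerations yields a single monomial order that works. A concrete route is to verify directly that the minors together with the lifted Gröbner basis elements of $I$ form a Gröbner basis of $J$, using that the minors alone do so for the Segre determinantal ideal under a diagonal term order and that the lifted elements involve only the variables $z_{i1}$.
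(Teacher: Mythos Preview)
Your overall strategy---pass from a putative counterexample $X$ to Conjecture~\ref{conj2} to the counterexample $X\times\PP^1$ to Conjecture~\ref{conj1} via the Segre embedding---is exactly the paper's argument, and your K\"unneth computation of $H^{e+1}_{\mm}(R)_{j_0}$ is a perfectly good substitute for the Goto--Watanabe formula \cite[Theorem~4.1.5]{GW78} that the paper invokes.

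The genuine gap is precisely where you flag it: the claim that the defining ideal $J$ of $X\times\PP^1\subset\PP^{2n-1}$ admits a square-free initial ideal whenever $I$ does. The paper does not argue this by hand; it simply cites \cite[Theorem~3.2]{Shi12}. Your proposed ``concrete route'' fails as stated, because the $2\times2$ minors together with the elements $f(z_{11},\dots,z_{n1})$ do \emph{not} generate $J$. Already for $n=3$ and $I=(x_1x_2-x_3^2)$ the quadric $z_{11}z_{22}-z_{31}z_{32}$ lies in $J$ but is not a $\KK$-linear combination of the three minors and $z_{11}z_{21}-z_{31}^2$ (compare the coefficient of $z_{31}z_{32}$), so these elements cannot form a Gr\"obner basis of $J$. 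One needs at least all bihomogenizations of each Gr\"obner basis element of $I$, not just the one supported on the variables $z_{i1}$. Your alternative two-step idea (degenerate $R$ to $(S/\init(I))\#\KK[s,t]$, then degenerate the latter) is closer in spirit to how such statements are actually proved, but packaging it into a single monomial order on $\KK[z_{ij}]$ still requires a nontrivial argument. Replacing your last paragraph with a citation to Shibuta's theorem closes the gap and yields exactly the paper's proof.
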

\begin{proof}
  
  Obviously \ref{conj2} implies \ref{conj1}, so let us assume \ref{conj1} is true and take a homogeneous prime ideal $I\subset S=\KK[x_1,\ldots ,x_n]$ contradicting \ref{conj2}.
  We have that $X=\Proj S/I$ is nonsingular, $\init(I)$ is square-free for some monomial order, $S/\init(I)$ is Cohen-Macaulay, but the $a$-invariant of $S/\init(I)$ is nonnegative.
  So also $S/I$ is Cohen-Macaulay with  nonnegative $a$-invariant.
  We show now that the Segre embedding $Y=X\times \PP^1\subset \PP^{2n-1}$ provides a counterexample to Conjecture \ref{conj1}: Let $A=\KK[y_{ij}:i\in \{0,1\},j\in\{1,\ldots ,n\}]$ and $P\subset A$ be the homogeneous prime ideal defining  $Y\subset \PP^{2n-1}$.
  Then $\Proj A/P\cong Y$ is nonsingular and $H_M^d(A/P)_0\cong H_{\mm}^d(S/I)_0\neq 0$ by \cite[Theorem 4.1.5]{GW78}, where $M$ is the unique homogeneous maximal ideal of $A/P$ and $d=\dim S/I$.
  Since $\dim A/P=d+1$, the ring $A/P$ is not Cohen-Macaulay, so $A/\init(P)$ is not Cohen-Macaulay for any monomial order on $A$.
 Finally, $\init(P)$ is square-free for some special monomial order by \cite[Theorem 3.2]{Shi12}.
\end{proof}

\begin{remark}
In the proof of Proposition~\ref{p:equivalent}  we actually showed that
\[
\mbox{Conjecture \ref{conj1} in dimension $\leq d+1$}\implies \mbox{Conjecture \ref{conj2} in dimension $\leq d$}.
\]
\end{remark}

Let us recall that there is a one-to-one correspondence between square-free monomial ideals $I_\D$ of $S$ and simplicial complexes $\D$ on $[n]=\{1,\ldots ,n\}$, given by:
\[\forall \ F\subset [n], \ F\in\D\iff \bfx:=\prod_{i\in F}x_i\notin I_{\D}.\]
In some instances it will be convenient to identify $F$ and $\bfx_F$, and simply write  $\bfx_F\in\D$.
An element of $\D$ is called {\it face}. By {\it maximal face} we mean maximal under inclusion. 
The dimension of a face $F\in \D$ is $\dim F=|F|-1$, and the dimension of $\D$ is
\[\dim \D=\max\{\dim F:F\in\D\}.\]
 The {\it link} of  $F\in \D$ is the simplicial complex $\link_\D F=\{G\subset [n]: G\cup F\in\D \mbox{ and }G\cap F=\emptyset\}$.
A simplicial complex $\D$ is {\it pure} precisely when all its maximal faces  have the same dimension. A simplicial complex is {\it strongly connected} if and only if for every pair of maximal faces $F,G\in\D$, there exist $F=F_0,F_1,\ldots ,F_r=G$ maximal faces of $\D$ such that $|F_i\cap F_{i+1}|=|F_i|-1$ for all $0\leq i<r$; in particular, if $\D$ is strongly connected then it is pure. We say that $\D$ is {\it normal} if and only if $\link_\D F$ is strongly connected for all $F\in\D$. Furthermore, $\D$ is \emph{Cohen-Macaulay} if $\widetilde{H}^i(\link_\D F;\KK)=0$ for all $F\in \D$ and $i<\dim \link_\D F$, while $\D$ is \emph{Buchsbaum} if $\widetilde{H}^i(\link_\D F;\KK)=0$ for all $\emptyset \neq F\in \D$ and $i<\dim \link_\D F$. These two notions agree with the corresponding algebraic notions for the Stanley-Reisner ring $\KK[\D]=S/I_{\D}$. Finally, if $\KK[\D]$ is Cohen-Macaulay, then it has negative $a$-invariant if and only if $\widetilde{H}^{\dim \D}(\D;\KK)=0$.

\begin{proposition}\label{p:necessary}
Let $I\subset S$ be an ideal such that $\init(I)=I_{\D}$ for some monomial order.
\begin{enumerate}[\upshape (i)]
\item If $S/I$ is a domain, then $\D$ is strongly connected.
\item If $S/I$ is a normal domain, then $\D$ is normal.
\item If $I$ is homogeneous and $\Proj S/I$ is an equidimensional nonsingular projective scheme, then $\D$ is Buchsbaum.
\end{enumerate}
\end{proposition}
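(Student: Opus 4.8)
The plan is to treat all three items by the same two-step pattern: first transfer the hypothesis on $S/I$ to the Stanley--Reisner ring $\KK[\D]=S/\init(I)$, and then read off the required combinatorial property of $\D$ from the standard dictionary between Serre-type conditions on $\KK[\D]$ and the combinatorics of $\D$. For (i) the transfer step is the Kalkbrener--Sturmfels theorem: if $S/I$ is a domain then $I$ is prime, and \cite{KS95} (for $\KK$ algebraically closed) together with \cite{Var09} (for an arbitrary base field) tell us that $\init(I)$ is equidimensional and that $V(\init(I))$ is connected in codimension one. Since $\init(I)=I_\D$ is already squarefree, $V(I_\D)$ is the union of the coordinate subspaces indexed by the facets of $\D$, and two of these subspaces meet in codimension one precisely when the corresponding facets share all but one of their vertices; hence ``equidimensional and connected in codimension one'' is, verbatim, the statement that $\D$ is strongly connected. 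I would spend one line making this last translation explicit.

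For (ii) I would use that a normal domain satisfies Serre's condition $(S_2)$ (Serre's normality criterion), so Theorem~\ref{CV}(iv) with $r=2$ gives that $\KK[\D]$ satisfies $(S_2)$ as well. It then remains to invoke the combinatorial characterisation of this condition for Stanley--Reisner rings: $\KK[\D]$ is $(S_2)$ over $\KK$ if and only if $\link_\D F$ is connected for every face $F\in\D$ with $\dim\link_\D F\ge 1$; combined with the elementary fact that a pure complex all of whose links of faces of codimension $\ge 2$ are connected is strongly connected, this is exactly the statement that $\link_\D F$ is strongly connected for every $F\in\D$, i.e. that $\D$ is normal.

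For (iii) the point is that the hypotheses force $S/I$ to be generalised Cohen--Macaulay, after which Theorem~\ref{CV}(iii) finishes the job. In the standard grading, $\Proj S/I$ is nonsingular if and only if $S/I$ is an isolated singularity (Remark~\ref{rem:standardVsNonstandardGrading}). The minimal primes of the homogeneous ideal $I$ are homogeneous, hence contained in $\mm$; so if $\dim S/I\ge 1$ then $\mm$ is not among them (it lies in the closure of every positive-dimensional component), the minimal primes of $I$ correspond bijectively to the irreducible components of $\Proj S/I$, and equidimensionality of $\Proj S/I$ yields equidimensionality of $S/I$ (the case $\dim S/I\le 0$ being trivial). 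An equidimensional isolated singularity is generalised Cohen--Macaulay (Remark~\ref{rem:gCMandPuncturedSpectrum}), so by Theorem~\ref{CV}(iii) the ring $\KK[\D]$ is Buchsbaum; since the algebraic and combinatorial notions of Buchsbaumness agree for Stanley--Reisner rings, $\D$ is Buchsbaum.

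The substance of the argument is concentrated in the two imported ingredients -- the Kalkbrener--Sturmfels theorem in (i) and the combinatorial descriptions of the $(S_2)$ and Buchsbaum properties of Stanley--Reisner rings in (ii)--(iii) -- whereas the deformation-theoretic half of each step is handed to us by Theorem~\ref{CV}. The only place where genuine care is needed is the bookkeeping in (iii): checking that ``$\Proj S/I$ equidimensional and nonsingular'' really does yield ``$S/I$ equidimensional isolated singularity'', i.e. tracking the passage between $\Spec$ and $\Proj$ and disposing of the low-dimensional degenerate cases.
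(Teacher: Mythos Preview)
Your proposal is correct and follows exactly the paper's own argument: \cite{KS95}/\cite{Var09} for (i), the transfer of $(S_2)$ via Theorem~\ref{CV}(iv) together with the combinatorial characterisation of $(S_2)$ for (ii), and the observation that $S/I$ is generalised Cohen--Macaulay followed by Theorem~\ref{CV}(iii) for (iii). The paper is considerably terser---it simply asserts that ``$\D$ is normal $\Leftrightarrow$ $\KK[\D]$ satisfies $(S_2)$'' in (ii) and ``$S/I$ is generalised Cohen--Macaulay'' in (iii)---while you spell out the translations and the passage from $\Proj$ to $\Spec$; but the substance is identical.
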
  
 \begin{proof}
   Point (i) follows by \cite{KS95} if the base field is algebraically closed, and by \cite{Var09} in general. \\
   (ii) Since $R/I$ is normal, it satisfies Serre condition $(S_2)$, so, by Theorem \ref{CV} (iv), $\KK[\D]$ satisfies $(S_2)$ as well; on the other hand, it is known and easy to prove that $\D$ is normal if and only if $\KK[\D]$ satisfies $(S_2)$. \\
   (iii) As $S/I$ is generalised Cohen-Macaulay,  we conclude by Theorem \ref{CV} (iii).
\end{proof}
 
Given a simplicial complex $\Delta$ on $n$ vertices, we say it is {\it Gröbner-smoothable} over $\KK$ if there exists a homogeneous prime ideal $I\subset \KK[x_1,\ldots ,x_n]$ defining a nonsingular projective variety and a monomial order such that $\init(I)=I_{\Delta}$. So a further equivalent formulation of Conjectures \ref{conj1} and \ref{conj2}, using Proposition \ref{p:equivalent}, is the following:
 
 \begin{conjecture}\label{conj3}
   A simplicial complex which is Gröbner-smoothable over $\KK$ is Cohen-Macaulay and acyclic over $\KK$.
 \end{conjecture}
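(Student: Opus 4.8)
The plan is to show that Conjecture~\ref{conj3} is equivalent to Conjecture~\ref{conj2}; combined with Proposition~\ref{p:equivalent} this yields that all three conjectures are equivalent. No new prime ideals or monomial orders enter the picture, so the argument is purely a dictionary check: one must match the algebraic conclusion ``$\KK[\D]$ is Cohen-Macaulay with negative $a$-invariant'' against the combinatorial conclusion ``$\D$ is Cohen-Macaulay and acyclic over $\KK$'', where $\D$ is the simplicial complex with $I_\D=\init(I)$.

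First I would unwind the definition of Gröbner-smoothability. A simplicial complex $\D$ on $[n]$ is Gröbner-smoothable over $\KK$ exactly when there is a homogeneous prime $I\subset S=\KK[x_1,\dots,x_n]$ with $\Proj S/I$ nonsingular and $\init(I)=I_\D$ for some monomial order; since a square-free initial ideal is the same thing as a Stanley--Reisner ideal, these are precisely the hypotheses of Conjectures~\ref{conj1} and~\ref{conj2}, only with $\init(I)$ christened $I_\D$. Hence Conjecture~\ref{conj2} is literally the assertion: if $\D$ is Gröbner-smoothable over $\KK$, then $\KK[\D]$ is Cohen-Macaulay with negative $a$-invariant. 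It therefore remains to check that, for any simplicial complex $\D$, the condition ``$\KK[\D]$ is Cohen-Macaulay with negative $a$-invariant'' is equivalent to ``$\D$ is Cohen-Macaulay and acyclic over $\KK$''.

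For the Cohen-Macaulay part this is Reisner's criterion, recorded in the paragraph preceding Proposition~\ref{p:necessary} (the two notions of Cohen-Macaulayness agree on $\KK[\D]$). For the remaining equivalence I would argue as follows: if $\D$ is Cohen-Macaulay over $\KK$, then taking $F=\emptyset$ in the definition and using $\link_\D\emptyset=\D$ together with $\dim\link_\D\emptyset=\dim\D$ gives $\widetilde{H}^i(\D;\KK)=0$ for all $i<\dim\D$. Thus the only reduced cohomology of $\D$ that can be nonzero sits in top degree, and ``$\D$ is acyclic over $\KK$'' reduces to the single vanishing $\widetilde{H}^{\dim\D}(\D;\KK)=0$. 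By the last sentence before Proposition~\ref{p:necessary}, for Cohen-Macaulay $\KK[\D]$ this vanishing is equivalent to the $a$-invariant of $\KK[\D]$ being negative. Chaining these equivalences, $\KK[\D]$ is Cohen-Macaulay with negative $a$-invariant $\iff$ $\D$ is Cohen-Macaulay with $\widetilde{H}^{\dim\D}(\D;\KK)=0$ $\iff$ $\D$ is Cohen-Macaulay and acyclic over $\KK$. Feeding this into the reformulation of Conjecture~\ref{conj2} from the previous paragraph identifies it with Conjecture~\ref{conj3}, and Proposition~\ref{p:equivalent} closes the loop with Conjecture~\ref{conj1}.

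The only place I expect to need a moment's thought is the reduction of ``acyclic'' to the single top-dimensional vanishing: one must notice that Cohen-Macaulayness by itself already forces every lower reduced (co)homology group of $\D$ to vanish, so that the $a$-invariant condition --- which a priori only controls the top degree --- is enough to detect acyclicity in full. Once this observation is in place, nothing remains but bookkeeping, since the translation from $I$ to $I_\D=\init(I)$, Reisner's criterion, the $a$-invariant versus top-cohomology correspondence, and the equivalence of Conjectures~\ref{conj1} and~\ref{conj2} are all already available.
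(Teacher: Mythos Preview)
Your proposal is correct and follows exactly the route the paper takes: the paper does not prove Conjecture~\ref{conj3} (it is, after all, a conjecture) but merely asserts that it is ``a further equivalent formulation of Conjectures~\ref{conj1} and~\ref{conj2}, using Proposition~\ref{p:equivalent}'', relying implicitly on the dictionary recorded just before Proposition~\ref{p:necessary} (Reisner's criterion and the equivalence of negative $a$-invariant with $\widetilde{H}^{\dim\D}(\D;\KK)=0$). You have simply spelled out that dictionary check in full, including the observation that Cohen--Macaulayness already kills the lower cohomology so that acyclicity collapses to the single top-degree vanishing; nothing in your argument departs from the paper's implicit reasoning.
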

 A simplicial complex $\D$ is \emph{acyclic over $\KK$} if all its  reduced cohomology with coefficients in $\KK$ vanishes.

\section{Degree reverse lexicographic degenerations}
\label{sec:degr-reverse-lexic}

Throughout this section $S=\KK[x_1,\ldots ,x_n]$ is again allowed to have any positive grading.

\begin{definition}
  \label{def:xnaddictedOrder}
A monomial order on $S$ is {\it $x_n$-addicted} if, whenever $f\in S$ and $x_n|\init (f)$, we have that $x_n|f$.
\end{definition}

\begin{remark}
  \label{everyDRLisAddicted}
Any degrevlex monomial order is $x_n$-addicted if $x_n$ is the smallest variable.
\end{remark}

\begin{theorem}
  \label{t:xnaddicted}
  Let $I\subset S$ be a homogeneous ideal such that $S/I$ is generalised Cohen-Macaulay and $x_n$ is $S/I$-regular. If $\init (I)$ is radical for some $x_n$-addicted monomial order, then $S/\init (I)$ is Cohen-Macaulay with negative $a$-invariant.
\end{theorem}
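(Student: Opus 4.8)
The plan is to deduce from the hypotheses that $\init(I)=I_\Delta$ is the Stanley--Reisner ideal of a \emph{cone}, and then to read Cohen--Macaulayness and the negativity of the $a$-invariant directly off the combinatorics of that cone, using Theorem~\ref{CV}(iii).

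First I would show that the $x_n$-addicted hypothesis transports the regularity of $x_n$ from $S/I$ to $S/\init(I)$, i.e.\ that $\init(I):x_n=\init(I)$. The inclusion $\supseteq$ is clear, and for $\subseteq$ it suffices to consider a monomial $\mu$ with $x_n\mu\in\init(I)$. Every monomial of $\init(I)$ is the leading monomial of some element of $I$, so $x_n\mu=\init(f)$ with $f\in I$; since $x_n\mid\init(f)$ and the order is $x_n$-addicted, $x_n\mid f$, say $f=x_nf'$, and then $x_nf'\in I$ together with $x_n$ being $S/I$-regular gives $f'\in I$. As $\init(f)=x_n\init(f')$ we conclude $\mu=\init(f')\in\init(I)$. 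Hence $x_n$ is a nonzerodivisor on $\KK[\Delta]:=S/\init(I)$.

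Next I would translate this combinatorially. Because $S/\init(I)$ is reduced, its associated primes are the minimal primes $(x_i:i\notin F)$ over the facets $F$ of $\Delta$, and $x_n$ being a nonzerodivisor means $n$ lies in every facet. Therefore $F\in\Delta\iff F\cup\{n\}\in\Delta$, so $\Delta=\{n\}\ast\Gamma$ is a cone with apex $n$ over $\Gamma:=\link_\Delta\{n\}=\{F\in\Delta:n\notin F\}$; moreover every minimal non-face of $\Delta$ avoids $n$, so $I_\Delta=I_\Gamma S$ and $\KK[\Delta]\cong\KK[\Gamma][x_n]$ with $\KK[\Gamma]=\KK[x_1,\dots,x_{n-1}]/I_\Gamma$. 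Now $S/I$ is generalised Cohen--Macaulay, so $\KK[\Delta]$ is Buchsbaum by Theorem~\ref{CV}(iii); that is, $\widetilde H^i(\link_\Delta F;\KK)=0$ for every nonempty $F\in\Delta$ and every $i<\dim\link_\Delta F$. For any $G\in\Gamma$ one has $\link_\Gamma G=\link_\Delta(G\cup\{n\})$ with $G\cup\{n\}\neq\emptyset$, so applying the Buchsbaum condition at $F=G\cup\{n\}$ yields $\widetilde H^i(\link_\Gamma G;\KK)=0$ for all $i<\dim\link_\Gamma G$. Since this holds for every $G\in\Gamma$, including $G=\emptyset$, the complex $\Gamma$ is Cohen--Macaulay, hence so is the one-variable polynomial extension $\KK[\Delta]=\KK[\Gamma][x_n]$. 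Finally, $\Delta$ is a cone, hence contractible, so $\widetilde H^{\dim\Delta}(\Delta;\KK)=0$; since $\KK[\Delta]$ is Cohen--Macaulay this is equivalent to its $a$-invariant being negative, which completes the argument.

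The main obstacle is the first step: the whole force of the $x_n$-addicted hypothesis is that it is exactly what is needed to pass regularity of $x_n$ from $I$ to $\init(I)$ (a plain monomial order can certainly produce an $\init(I)$ for which $x_n$ becomes a zerodivisor). Once that is secured, the remaining steps are the standard dictionary ``$x_n$ regular $\Leftrightarrow$ $n$ is a cone point'', the preservation of Cohen--Macaulayness under polynomial extension, and the fact that vertex links of Buchsbaum complexes are Cohen--Macaulay.
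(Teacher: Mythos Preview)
Your proof is correct and follows essentially the same strategy as the paper: both arguments use the $x_n$-addicted hypothesis together with the regularity of $x_n$ to show that $n$ is a cone point of $\Delta$, and then combine contractibility of the cone with Theorem~\ref{CV}(iii). The only cosmetic difference is that the paper reads off Cohen--Macaulayness and the negative $a$-invariant simultaneously from Hochster's formula (for a Buchsbaum Stanley--Reisner ring the lower local cohomology is concentrated in degree $0$ and equals $\widetilde H^{i-1}(\Delta;\KK)$, which vanishes by contractibility), whereas you deduce Cohen--Macaulayness via Reisner's criterion applied to $\Gamma=\link_\Delta\{n\}$ and then treat the $a$-invariant separately.
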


\begin{proof}
  By Theorem \ref{CV} (iii), $S/\init(I)$ is generalised Cohen-Macaulay.
  By Hochster's formula for local cohomology \cite[Theorem 3.5.8]{BH93}  we infer:
  \[
    H_\mm^i(S/\init(I))=H_\mm^i(S/\init(I))_0\cong \widetilde{H}^{i-1}(\Delta) \ \ \forall \ i\geq 1,
  \] 
where $\D$ is the simplicial complex on $n$ vertices such that $\init(I)=I_\D$, the Stanley-Reisner ideal of $\Delta$.
Since $H_\mm^0(S/\init(I))$ trivially vanishes, it is enough to show that $\widetilde{H^i}(\Delta)=0$ for all $i\in\NN$.
We will show an even stronger statement, namely, that $\Delta$ is contractible.

In order to do so, it is enough to show that the vertex corresponding to $x_n$ is a cone point for $\Delta$, i.e. that $x_n$ is contained in every maximal face of $\D$.
This is equivalent to  $x_n$  not dividing any minimal monomial generator of $\init(I)$.
By contradiction, let $u$ be a minimal monomial generator of $\init(I)$ which is divisible by $x_n$, and let $f\in I$ such that $\init(f)=u$.
As the monomial order  is $x_n$-addicted, there exists $g\in S$ such that $f=x_ng$. Since $x_n$ is $S/I$-regular, we have $g\in I$. In particular,  $\init(g)$ is a monomial of $\init(I)$ strictly dividing $u$, thus contradicting the minimality of $u$.
\end{proof}

The following corollary solves  Conjectures \ref{conj1} and \ref{conj2} for any degree reverse lexicographic monomial order. 

\begin{corollary}\label{c:degrevlex}
Let $I\subset S$ be homogeneous prime ideal such that $\init (I)$ is a square-free monomial ideal with respect to a degrevlex monomial order. 
\begin{enumerate}[\upshape (i)]
\item If $S/I$ is generalised Cohen-Macaulay, then $S/\init(I)$ (and thus also $S/I$) is Cohen-Macaulay with negative $a$-invariant.
\item In characteristic zero, $S/I$ has rational singularities whenever it has rational singularities on the punctured spectrum.
\item In positive characteristic, $S/I$ is $F$-rational whenever it is $F$-rational on the punctured spectrum.
\end{enumerate}
\end{corollary}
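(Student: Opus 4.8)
\emph{Overview and Part (i).} The plan is to deduce all three parts from Theorem~\ref{t:xnaddicted}, together with the standard graded criteria that recognise rational (respectively $F$-rational) singularities from Cohen--Macaulayness, the sign of the $a$-invariant, and the behaviour on the punctured spectrum. For part (i) I would first arrange that $x_n$ is the smallest variable of the given degrevlex order, so that the order is $x_n$-addicted by Remark~\ref{everyDRLisAddicted}. As $I$ is prime, $S/I$ is a domain, so $x_n$ is $S/I$-regular unless $x_n\in I$. If $x_n\notin I$, then Theorem~\ref{t:xnaddicted} applies verbatim and gives that $S/\init(I)$ is Cohen--Macaulay with negative $a$-invariant. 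If $x_n\in I$, note that $x_n$ then lies in the reduced Gr\"obner basis of $I$ (a linear form whose leading term $x_n$ is not divisible by any other leading term), whence by reducedness every other basis element involves only $x_1,\dots,x_{n-1}$; thus $S/I\cong S'/I'$ with $S'=\KK[x_1,\dots,x_{n-1}]$ and $I'=I\cap S'$ a prime ideal, and $\init_{<'}(I')$ is obtained from $\init(I)$ by deleting $x_n$, hence is again squarefree for the restricted degrevlex order. Since generalised Cohen--Macaulayness is an intrinsic property of the ring, I can iterate; after finitely many such steps (the smallest variables lying in $I$ are discarded, and not all variables can be discarded unless $I=\mm$, which we exclude) the smallest variable becomes a nonzerodivisor, and Theorem~\ref{t:xnaddicted} then shows that $\KK[\D]=S/\init(I)$ is Cohen--Macaulay with negative $a$-invariant. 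Finally, by Theorem~\ref{CV}(i) the graded local cohomology of $S/I$ coincides degreewise with that of $S/\init(I)$ (and the two rings have the same dimension), so $S/I$ inherits both properties.

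\emph{Parts (ii) and (iii).} For (ii), assume $\chara\KK=0$ and that $S/I$ has rational singularities on the punctured spectrum. By Remark~\ref{rem:gCMandPuncturedSpectrum} this forces $S/I$ to be equidimensional (it is a domain) and Cohen--Macaulay on the punctured spectrum, i.e.\ generalised Cohen--Macaulay, so part (i) applies and shows that $S/I$ is Cohen--Macaulay with negative $a$-invariant; in particular it satisfies $(S_2)$, and being regular in codimension one it is normal. I would then conclude by invoking the graded criterion --- classical in the isolated quasi-homogeneous case by work of Flenner and Watanabe, and available in the form one needs on the punctured spectrum --- stating that a positively graded normal Cohen--Macaulay $\KK$-algebra with negative $a$-invariant which has rational singularities on its punctured spectrum has rational singularities. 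Part (iii) proceeds in the same way in characteristic $p>0$: $F$-rationality implies Cohen--Macaulayness, so $S/I$ is Cohen--Macaulay on the punctured spectrum and hence (being a domain) generalised Cohen--Macaulay, part (i) yields that $S/I$ is Cohen--Macaulay with negative $a$-invariant, and one finishes with the positive-characteristic analogue --- the extension of the Fedder--Watanabe criterion --- that such a ring which is $F$-rational on its punctured spectrum is $F$-rational.

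\textbf{Main obstacle.} The only genuinely new ingredient, namely that the hypotheses force $\D$ to be a cone over the smallest vertex and hence contractible, is already packaged in Theorem~\ref{t:xnaddicted}, and the bookkeeping that makes the smallest variable a nonzerodivisor is routine. The real difficulty is external to this paper: locating and correctly applying the ``rational (resp.\ $F$-rational) $\iff$ Cohen--Macaulay $+$ negative $a$-invariant $+$ good on the punctured spectrum'' statements --- above all the $F$-rational version for possibly non-isolated singularities, the recent generalisation of the Fedder--Watanabe criterion referred to in the acknowledgements --- and checking that our setting (graded, excellent, equidimensional, normal) satisfies their hypotheses.
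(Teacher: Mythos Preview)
Your argument for parts (i) and (ii) is essentially the paper's: reduce to the case where the smallest variable is a nonzerodivisor and invoke Theorem~\ref{t:xnaddicted}, then appeal to Watanabe's graded criterion \cite[Theorem~2.2]{Wat83}. The induction on the number of variables and the transfer of Cohen--Macaulayness and the $a$-invariant back to $S/I$ via Theorem~\ref{CV} are handled just as in the paper (in fact you spell out more of the bookkeeping).

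Part (iii), however, has a genuine gap. The Fedder--Watanabe criterion that the paper invokes requires \emph{four} hypotheses: Cohen--Macaulay, negative $a$-invariant, $F$-rational on the punctured spectrum, \emph{and $F$-injective}. Your formulation of the criterion drops the last one, and $F$-injectivity of $S/I$ is not automatic from the other three. The paper supplies it by an extra step that exploits the squarefree hypothesis a second time: since $\init(I)$ is squarefree, $S/\init(I)$ is a Stanley--Reisner ring and hence $F$-injective; since it is also Cohen--Macaulay by part (i), $F$-injectivity deforms to $S/I$ by \cite[Theorem~2.1]{CH97}. Only then can one feed all four hypotheses into the (generalised) Fedder--Watanabe theorem \cite[Theorem~2.8]{FW89}, \cite[Theorem~5.49]{Kol18}. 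So the ``real difficulty'' you flag in your final paragraph is not merely locating the right reference; one of the inputs to that reference has to be manufactured from the Gr\"obner degeneration itself, and your outline does not do this.
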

\begin{proof}
(i) We can assume that $x_1>\ldots >x_n$. If $x_n\notin I$, then we conclude by Theorem \ref{t:xnaddicted}. Otherwise, set $S'=S/(x_n)$ and $I'=I/(x_n)$. Then $I'\subset S'$ is a prime ideal such that $S'/I'\cong S/I$ is generalised Cohen-Macaulay and $\init (I')=\init(I)/(x_n)$ is a square-free monomial ideal with respect to the degrevlex monomial order extending the linear order $x_1>\ldots >x_{n-1}$. By induction on the number of variables, $S'/\init(I')\cong S/\init(I)$ is Cohen-Macaulay with negative $a$-invariant.

(ii) It follows from (i) and \cite[Theorem 2.2]{Wat83}.

(iii) Since $\init(I)$ is a square-free monomial ideal, $S/\init(I)$ is $F$-injective. By (i), $S/\init(I)$ is Cohen-Macaulay, so we get that $S/I$ is $F$-injective using \cite[Theorem 2.1]{CH97}. So, using (i), we have that $R$ is Cohen-Macaulay, $F$-injective, $F$-rational on the punctured spectrum, and has negative $a$-invariant. In \cite[Theorem 2.8, Remark 1.17]{FW89}, Fedder and Watanabe proved that these conditions, under the stronger assumption that $R$ is an isolated singularity, imply that $R$ is $F$-rational. It turns out that their proof works as well without the stronger assumption (see \cite[Theorem 5.49, Lemma 5.44]{Kol18} for a rigorous proof), hence we conclude.
\end{proof}

\begin{remark}
  \label{rem:ForDRLconjecturesAreStronger}
 Corollary \ref{c:degrevlex} solves in positive Conjectures \ref{conj1} and \ref{conj2} for a degree reverse lexicographic monomial order even. In fact, it uses only the weaker assumption that $\Proj S/I$ is Cohen-Macaulay, instead of  nonsingular. However, this stronger version of Conjectures \ref{conj1} and \ref{conj2} is not true for other monomial orders, as the following examples show.
\end{remark}

\begin{example1}
\begin{enumerate}[\upshape (i)]
\item Let $f=xyz+y^3+z^3\in S=\KK[x,y,z]$, $I=(f)$ and choose the lexicographic monomial order with $x>y>z$. Then $I$ is a homogeneous (w.r.t. the standard grading) prime ideal, $\Proj S/I$ is Cohen-Macaulay and $\init(I)=(xyz)$ is square-free. However $S/\init(I)$ is Cohen-Macaulay with $a$-invariant 0, so Conjecture \ref{conj2} is false if we replace ``$\Proj S/I$ is nonsingular'' by ``$\Proj S/I$ is Cohen-Macaulay''.
\item In \cite[Example 3.4]{Var18}, a homogeneous prime ideal $I$ of the standard graded polynomial ring $S$ in 6 variables over $\KK$ is considered. That ideal satisfies: $\Proj S/I$ is a Cohen-Macaulay surface, $\init(I)$ is square-free for a lexicographic monomial order, but $S/\init(I)$ is not Cohen-Macaulay. So also Conjecture \ref{conj1} is false if we replace ``$\Proj S/I$ is nonsingular'' by ``$\Proj S/I$ is Cohen-Macaulay''.
\end{enumerate}
\end{example1}

In the following corollary, the notion ``graded algebra with straightening laws'' is the same  used in \cite{BV88}.

\begin{corollary}\label{c:asl}
Let $R$ be a graded algebra with straightening laws over a field such that $R$ is a domain. 
\begin{enumerate}[\upshape (i)]
\item If $R$ is generalised Cohen-Macaulay, then $R$ is Cohen-Macaulay with negative $a$-invariant.
\item In characteristic zero, $R$ has rational singularities whenever it has rational singularities on the punctured spectrum.
\item In positive characteristic, $R$ is $F$-rational whenever it is $F$-rational on the punctured spectrum.
\end{enumerate}
\end{corollary}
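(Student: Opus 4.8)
The plan is to reduce Corollary~\ref{c:asl} to Corollary~\ref{c:degrevlex} by exhibiting a graded ASL as a degrevlex Gröbner degeneration of its discrete counterpart, and then checking that all the hypotheses transport correctly. First I would recall the standard structural fact (as in \cite{DEP82,BV88}): if $R$ is a graded ASL on a finite poset $H$ over $\KK$, then $R\cong \KK[x_h: h\in H]/I$ where the straightening relations express each non-standard product $x_a x_b$ (with $a,b$ incomparable) as $x_a x_b$ minus a $\KK$-linear combination of standard monomials, and moreover, for a suitable degrevlex order (one refining the partial order on the variables coming from $H$), the initial ideal $\init(I)$ is exactly the Stanley--Reisner ideal $I_{\Delta(H)}$ of the order complex of $H$. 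The key point is that the leading term of each straightening relation is the non-standard (hence squarefree) monomial $x_a x_b$, so $\init(I)$ is generated by squarefree monomials; in particular it is radical, and it is generated in degree $2$.

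With this in hand, part (i) is immediate: $I$ is a homogeneous prime ideal (since $R$ is a domain), $\init(I)$ is a squarefree monomial ideal with respect to a degrevlex order, and $R=S/I$ is generalised Cohen--Macaulay by hypothesis; so Corollary~\ref{c:degrevlex}(i) gives that $R$ is Cohen--Macaulay with negative $a$-invariant. For (ii), note that ``$R$ has rational singularities on the punctured spectrum'' is exactly the condition feeding Corollary~\ref{c:degrevlex}(ii); but one must first observe that this hypothesis already implies $R$ is generalised Cohen--Macaulay, which is precisely the content of Remark~\ref{rem:gCMandPuncturedSpectrum} (rational singularities on the punctured spectrum forces $R$ to be equidimensional and Cohen--Macaulay on $\Spec R\setminus\{\mm\}$, hence generalised Cohen--Macaulay). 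One subtlety: $R$ may not be equidimensional a priori, but a domain is equidimensional, so this is free. Then Corollary~\ref{c:degrevlex}(ii) applies verbatim. Part (iii) is entirely analogous, using that $F$-rationality on the punctured spectrum likewise implies generalised Cohen--Macaulayness (an $F$-rational local ring is Cohen--Macaulay, so $R$ is Cohen--Macaulay on the punctured spectrum, and a domain is equidimensional), and then invoking Corollary~\ref{c:degrevlex}(iii).

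The one place that requires genuine care — and which I expect to be the main obstacle — is the claim that a graded ASL over $\KK$ genuinely fits the setup of Corollary~\ref{c:degrevlex}, i.e. that there is an honest presentation $R = \KK[x_1,\dots,x_n]/I$ with $I$ homogeneous and $\init(I)$ squarefree \emph{for a degrevlex order}. The existence of such a presentation with $\init(I)=I_{\Delta(H)}$ is classical, but the grading must be checked: a graded ASL need not be standard graded, yet Corollary~\ref{c:degrevlex} is stated for any positive grading, so the straightening relations only need to be homogeneous for the given positive grading, which they are by definition of a graded ASL (the generators $x_h$ are homogeneous and the relations are homogeneous). One also needs the degrevlex order to be compatible: here one chooses a linear extension of $H$ and uses the induced degrevlex order on the $x_h$; the defining property of an ASL — that in a straightening relation $x_a x_b = \sum \lambda_M M$ every standard monomial $M$ appearing involves a chain whose minimum is $\le a$ and $\le b$ — guarantees that $x_a x_b$ is indeed the degrevlex-largest term, so that $\init(I)\supseteq I_{\Delta(H)}$; equality then follows by a Hilbert-function count, since both $R$ and $\KK[\Delta(H)]$ have the same Hilbert series (standard monomials of $R$ are in bijection with chains of $H$). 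Once this is spelled out, the corollary drops out of Corollary~\ref{c:degrevlex} with no further work.

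\begin{proof}
Write $R=\KK[x_h:h\in H]/I$ for the defining poset $H$, where $I$ is generated by the straightening relations; since $R$ is a graded ASL, $I$ is homogeneous for the given positive grading, and since $R$ is a domain, $I$ is prime. Fix a linear extension of the partial order on $H$ and let $<$ be the degrevlex order on $S=\KK[x_h:h\in H]$ refining it. By the defining property of the straightening relations, the leading term of each such relation $x_ax_b - \sum_M \lambda_M M$ (with $a,b$ incomparable) is the squarefree monomial $x_ax_b$; hence $\init(I)\supseteq I_{\Delta(H)}$, the Stanley--Reisner ideal of the order complex of $H$. As the standard monomials of $R$ (products along chains of $H$) form a $\KK$-basis of $R$, the Hilbert series of $R$ and of $\KK[\Delta(H)]=S/I_{\Delta(H)}$ coincide, forcing $\init(I)=I_{\Delta(H)}$. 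In particular, $\init(I)$ is a squarefree monomial ideal for a degrevlex monomial order.

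(i) This is now immediate from Corollary~\ref{c:degrevlex}(i) applied to the homogeneous prime ideal $I$.

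(ii) By Remark~\ref{rem:gCMandPuncturedSpectrum}, if $R$ has rational singularities on the punctured spectrum then $R$ is equidimensional (being a domain) and Cohen--Macaulay on $\Spec R\setminus\{\mm\}$, hence generalised Cohen--Macaulay. Now Corollary~\ref{c:degrevlex}(ii) gives the claim.

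(iii) Likewise, if $R$ is $F$-rational on the punctured spectrum then it is Cohen--Macaulay on $\Spec R\setminus\{\mm\}$ and equidimensional, hence generalised Cohen--Macaulay, and Corollary~\ref{c:degrevlex}(iii) applies.
\end{proof}
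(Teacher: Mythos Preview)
Your proposal is correct and follows essentially the same route as the paper: present the graded ASL as $S/I$ with $I$ homogeneous prime and $\init(I)$ squarefree for a degrevlex order, then invoke Corollary~\ref{c:degrevlex}. You supply more detail than the paper does (the Hilbert-series argument for $\init(I)=I_{\Delta(H)}$, and the passage through generalised Cohen--Macaulayness for (ii) and (iii)), but note that the latter step is redundant: parts (ii) and (iii) of Corollary~\ref{c:degrevlex} already take ``rational singularities (resp.\ $F$-rational) on the punctured spectrum'' as hypothesis, so you may apply them directly without first checking generalised Cohen--Macaulayness.
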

\begin{proof}
  Let $\Omega$ be the set of ASL generators of $R$, i.e. the partially ordered \KK-algebra generators of $R$ to which the straightening laws apply. This means that  $R\cong S/I$ where $S=\KK[x_{\omega}:\omega\in\Omega]$ is the polynomial ring over $\KK$ whose variables correspond to the elements of $\Omega$
  and $I\subset S$ is a prime ideal such that $\init (I)$ is a square-free monomial ideal with respect to any degrevlex monomial order on $S$ extending the partial order of $\Omega$. By supplying $S$ with the degrees given by $\deg(x_{\omega})=\deg(\omega)$ for all $\omega\in\Omega$, $I$ is a homogeneous ideal of $S$, so we conclude by Corollary \ref{c:degrevlex}.
\end{proof}

\begin{remark}
  \label{rem:isolatedSingularitiesFullfillTheCorollaries}
  If $R=S/I$ is an isolated singularity, the  hypothesis of all three parts of Corollary \ref{c:degrevlex} and Corollary \ref{c:asl} are fulfilled. 
\end{remark}


\section{Gröbner deformations for arbitrary monomial orders}
\label{sec:grobn-deform-arbitr}
From the remaining part of this article we will assume that the graded structure on $S=\KK[x_1,\ldots ,x_n]$ is the standard one.
 \subsection{Complete intersections}
\label{sec:completeIntersections}
 Complete intersections are a particularly nice case to consider: they are always smoothable (in fact there are no obstructions to lifting infinitesimal deformations),
however we are going to see that square-free monomial complete intersections using all the variables of $\KK[x_1,\ldots ,x_n]$ in their minimal generators are not Gröbner-smoothable. The proof of this result is a simple combinatorial argument, that however we find instructive: it is a generalisation of the easy fact that a polynomial defining a Calabi-Yau hypersurface cannot have a square-free initial monomial, which was one of the starting points of this paper.

%

Let $I=(u_1,\ldots ,u_c)$ be a monomial ideal of $S=\KK[x_1,\ldots ,x_n]$ minimally generated by monomials $u_1,\ldots ,u_c$ of degree $d_i=\deg u_i$. 
Then $I$ is a complete intersection (of height $c$) if and only if the $u_i$'s are supported on disjoint sets of variables. So, if $I=I_{\Delta}$ is square-free, then  $d_1+\ldots +d_c\leq n$ with equality holding if and only if $\KK[\Delta]$ has $a$-invariant 0. 

Let $I=I_{\Delta}\subset \mm^2$ be a complete intersection such that $\KK[\Delta]$ has $a$-invariant 0. Then $\Delta$ is the join of $c$ boundaries of simplices. So let $\D=\partial\D_1\ast\dots\ast\partial\D_c$, with $\dim\D_i=d_i-1\ge 1$.
Let the variables corresponding to $\D_i$ be $x_{i1},\dots,x_{id_i}$, so we have
\[
  I_\D=(x_{11}\dots x_{1d_1},\dots,x_{c1}\dots x_{cd_c}).
  \]
  Let $\tau$ be some monomial order, and assume that $I=(f_1,\dots,f_c)$ is a reduced homogeneous Gröbner basis with
  \[
    f_i = x_{i1}\dots x_{id_i}- \sum \la_M M, \textup{where } \la_M\in \KK \textup{~and~}M\le_\tau x_{i1}\dots x_{id_i}.
  \]
  As the Gröbner basis is homogeneous and reduced, all the monomials $M$ above have degree $d_i$ and  are supported on faces of  $\D$. Without losing generality we may always assume that we have the following inequalities for $\tau$:
  \begin{equation}
    \label{eq:orderOfVariablesAssumption}
    \begin{array}{rcccll}
      x_{i1}&>&\dots&>&x_{id_i}&\forall \ i,\\
      x_{11}&\ge&x_{ij}&&&\forall \ x_{ij},\\
      x_{ij}&\ge &x_{cd_c}&&&\forall \ x_{ij}\textup{~with~}i>1.
    \end{array}
  \end{equation}
\begin{lemma}
    \label{lem:ciGBSupport}
    With the above convention, if $\{f_1,\dots,f_c\}$ is a reduced Gröbner basis of the ideal $I$ it generates, then the point $[1:0:\dots:0]$ is a singular point of $\Proj S/I$, where the first projective coordinate corresponds to the variable $x_{11}$.
  \end{lemma}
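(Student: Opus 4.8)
The plan is to show that the affine cone over $\Proj S/I$ is singular along the line $\ell = \{x_{ij} = 0 : (i,j) \neq (1,1)\}$, which corresponds to the point $[1:0:\dots:0]$ in $\Proj S/I$. I would work with the Jacobian matrix of the generators $f_1, \dots, f_c$ and evaluate it at a point $p = (1, 0, \dots, 0)$ on this line. Since $I$ is a complete intersection of height $c$, the point $p$ is a smooth point of the cone if and only if the $c \times n$ Jacobian matrix $\big(\partial f_k / \partial x_{ij}\big)$ has rank $c$ at $p$; so the goal is to prove this rank drops below $c$ at $p$.

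First I would compute $\partial f_k / \partial x_{ij}\,(p)$. Each $f_k = x_{k1}\cdots x_{kd_k} - \sum_M \lambda_M M$, where every monomial $M$ has degree $d_k \geq 2$ and is supported on a face of $\D$. Differentiating the leading term $x_{k1}\cdots x_{kd_k}$ with respect to any variable and evaluating at $p$ gives $0$, because $d_k \geq 2$ means the derivative is still a nonconstant monomial in the $x_{kj}$'s, all of which (including possibly $x_{11}$ if $k=1$... here one must be slightly careful) — more precisely, the partial derivative of $x_{k1}\cdots x_{kd_k}$ with respect to $x_{kj}$ is $\prod_{j' \neq j} x_{kj'}$, a product of $d_k - 1 \geq 1$ variables; at $p$ this vanishes unless that product is exactly $x_{11}$, which can only happen if $k = 1$, $d_1 = 2$, and $\{x_{1j'}\} \setminus \{x_{1j}\} = \{x_{11}\}$, i.e. we differentiate $f_1 = x_{11}x_{12} - \cdots$ with respect to $x_{12}$. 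So the leading-term contribution to the Jacobian at $p$ is zero \emph{except} possibly for the single entry $\partial f_1/\partial x_{12}\,(p)$, and only in the case $d_1 = 2$.

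Next I would handle the tail terms $\sum_M \lambda_M M$. Each such $M$ is a degree-$d_k$ monomial supported on a face of $\D = \partial\D_1 * \cdots * \partial\D_c$; since no face of $\D$ contains all of $\{x_{i1}, \dots, x_{id_i}\}$ for any $i$, in particular (using the ordering assumptions \eqref{eq:orderOfVariablesAssumption} which force $M <_\tau x_{k1}\cdots x_{kd_k}$ and the reducedness) $M$ cannot be a pure power of $x_{11}$ unless $d_k = 1$, which is excluded; hence every $M$ involves at least one variable other than $x_{11}$, or is divisible by $x_{11}^a$ with $a < d_k$ and at least one other variable. Then $\partial M / \partial x_{ij}$ evaluated at $p$: this is nonzero only if, after removing one copy of $x_{ij}$, what remains is a power of $x_{11}$ — i.e. $M = x_{ij} \cdot x_{11}^{d_k - 1}$. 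The order constraints in \eqref{eq:orderOfVariablesAssumption} are exactly what is needed here: they must be shown to force such a monomial to be $\geq_\tau$ or incomparable in a way that contradicts $M <_\tau x_{k1}\cdots x_{kd_k}$, \emph{or} to force $x_{ij} = x_{11}$ (giving a pure power, excluded). I expect that the surviving possibilities are severely limited — plausibly only an entry in row $1$ and possibly row... — and that in all cases at least one full row of the Jacobian at $p$ vanishes, or two rows become proportional, forcing rank $< c$. The cleanest route is likely: show that \emph{row $c$} (corresponding to $f_c$, whose leading variables $x_{c1}, \dots, x_{cd_c}$ are the $\tau$-smallest) is identically zero at $p$, because any monomial $M \leq_\tau x_{c1}\cdots x_{cd_c}$ of degree $d_c \geq 2$ whose derivative survives at $p$ would have to be $x_{11}^{d_c - 1} x_{ij}$, which by \eqref{eq:orderOfVariablesAssumption} is $\geq_\tau x_{11}^{d_c-1} x_{cd_c} >_\tau x_{c1}\cdots x_{cd_c}$ (since $x_{11} \geq x_{cj}$ for all $j$), a contradiction — hence row $c$ is zero and the Jacobian has rank at most $c - 1$.

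\textbf{Main obstacle.} The delicate point is the bookkeeping with the monomial order: one must carefully use all three families of inequalities in \eqref{eq:orderOfVariablesAssumption} to rule out the "survivor" monomials $x_{11}^{d-1} x_{ij}$ in the relevant rows, and check the boundary case $d_i = 2$ (where a single partial derivative of the leading term can itself equal $x_{11}$). It is conceivable that more than one row must be examined, or that one argues instead that two rows of the Jacobian at $p$ are scalar multiples of each other (both supported only in the $x_{11}$-derivative column and a bounded set of columns); in any case the combinatorial heart is showing that the face condition defining $\D$ plus the order assumptions leave no room for a rank-$c$ Jacobian at $[1:0:\dots:0]$.
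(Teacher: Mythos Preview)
Your overall strategy --- evaluate the Jacobian of $(f_1,\dots,f_c)$ at $p=(1,0,\dots,0)$ and show its rank is $<c$ --- is exactly the paper's, and your identification of the ``survivor'' monomials $x_{11}^{d_k-1}x_{ij}$ is correct. But the concrete argument you give for the ``cleanest route'' (row $c$ vanishes) has a genuine gap.

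You claim that any monomial $x_{11}^{d_c-1}x_{ij}$ satisfies $x_{11}^{d_c-1}x_{ij}\ge_\tau x_{11}^{d_c-1}x_{cd_c}$, hence is $>_\tau x_{c1}\cdots x_{cd_c}$ and so cannot lie in the tail of $f_c$. The second inequality is fine, but the first uses $x_{ij}\ge x_{cd_c}$, and assumption \eqref{eq:orderOfVariablesAssumption} only gives this for $i>1$. Nothing prevents some $x_{1j}$ with $j\ge 2$ from being \emph{smaller} than $x_{cd_c}$; in that case $x_{11}^{d_c-1}x_{1j}$ may very well appear in $\supp f_c$ (its support $\{x_{11},x_{1j}\}$ is a face of $\Delta$ as soon as $d_1\ge 3$), and row $c$ of the Jacobian is then nonzero at $p$.

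The paper's proof shows this really is a two-row argument. If such a monomial $x_{11}^{d_c-1}x_{1j}$ occurs in $f_c$, one extracts two consequences: reducedness forces $\{x_{11},x_{1j}\}\in\Delta$, hence $d_1\ge 3$; and $x_{11}^{d_c-1}x_{1j}<x_{c1}\cdots x_{cd_c}<x_{11}^{d_c-1}x_{cd_c}$ gives $x_{1j}<x_{cd_c}$. Now look at row~$1$: a survivor $x_{11}^{d_1-1}x_{kl}$ in $\supp f_1$ with $k=1$ is $\ge x_{11}^{d_1-1}x_{1d_1}>x_{11}\cdots x_{1d_1}$ (using $d_1\ge 3$), impossible; and with $k>1$ one has $x_{kl}\ge x_{cd_c}>x_{1j}$, so $x_{11}^{d_1-1}x_{kl}>x_{11}^{d_1-1}x_{1j}>x_{11}\cdots x_{1d_1}$, again impossible. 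Thus row~$1$ vanishes. Your hedge (``conceivable that more than one row must be examined'') was on target; the missing idea is precisely this interplay between rows $c$ and $1$ via the derived inequality $x_{1j}<x_{cd_c}$.
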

  \begin{proof}
    First of all, as $x_{11}^a$ is the largest monomial of degree $a$, it does not appear in the support of any $f_i$, so $[1:0:\dots:0]\in \Proj S/I$. Assume the contrary, namely that $[1:0:\dots:0]$ is a smooth point of $\Proj S/I$. Then the Jacobian computed at $[1:0:\dots:0]$ has to have maximal rank $c$. This implies that  no row can be zero when substituting. In particular,  in the support of $f_c$, we must have some monomial of the form $x_{11}^{d_c-1}x_{ij}$. If $i>1$, then $x_{cd_c}< x_{ij}$. Because all other variables $x_{ck}<x_{11}$, we have $x_{c1}\dots x_{cd_c} < x_{11}^{d_c-1}x_{ij}$, contradicting $\init(f_c)=x_{c1}\dots x_{cd_c}$. So the monomial has to be $x_{11}^{d_c-1}x_{1j}$ for some $j$. This implies two things:
    \begin{eqnarray}
      \label{eq:d1>2}
      x_{11}x_{1j}\in \D&&\textup{because the Gröbner basis is reduced}.\\
      \label{eq:1j<cdc}
      x_{1j}<x_{cd_c}&& \textup{because~}  x_{c1}\dots x_{cd_c} > x_{11}^{d_c-1}x_{1j}.                             
    \end{eqnarray}
    From \eqref{eq:d1>2} we obtain that $d_1>2$.  Given that the row of the Jacobian corresponding to $f_1$ cannot be zero when substituting $[1:0:\dots:0]$, we must have some monomial $x_{11}^{d_1-1}x_{kl}$ in the  support of $f_1$. We claim that this leads to a contradiction. There are two cases:\\
    \noindent    If $k=1$, then $x_{11}^{d_1-1}x_{1l}>x_{11}\dots x_{1d_1}$ which is  a contradiction to $\init(f_1)=x_{11}\dots x_{1d_1}$.\\
    \noindent    If $k>1$, then by \eqref{eq:1j<cdc} we have $x_{1j}<x_{cd_c}$, and by assumption (\ref{eq:orderOfVariablesAssumption}) we have $x_{cd_c}<x_{kl}$. So
    $x_{11}\dots x_{1d_1}<x_{11}^{d_1-1}x_{1j}<x_{11}^{d_1-1}x_{kl}$ which is again a contradiction to $\init(f_1)=x_{11}\dots x_{1d_1}$.
  \end{proof}
  
  \begin{corollary}
    \label{cor:ciAreNeverGrobSmoothable}
A join of boundaries of positive dimensional simplices is not Gröbner-smoothable over any field $\KK$.
\end{corollary}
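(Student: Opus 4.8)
The plan is to reduce the statement entirely to Lemma \ref{lem:ciGBSupport}: the only work is to check that a hypothetical Gröbner smoothing of $\D=\partial\D_1\ast\cdots\ast\partial\D_c$, with $\dim\D_i=d_i-1\ge 1$ (so each $d_i\ge 2$), puts us in the situation described just before that lemma.

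First I would argue by contradiction, assuming $\D$ is Gröbner-smoothable: there is a standard-graded homogeneous prime $I\subset S=\KK[x_1,\dots,x_n]$ with $\Proj S/I$ nonsingular and a monomial order $\tau$ such that $\init(I)=I_\D$. Naming the vertices of $\D_i$ as $x_{i1},\dots,x_{id_i}$, we have $I_\D=(x_{11}\cdots x_{1d_1},\dots,x_{c1}\cdots x_{cd_c})$. Next comes a harmless relabeling bringing $\tau$ into the normalized form \eqref{eq:orderOfVariablesAssumption}: reindex the blocks so that the $\tau$-largest variable lies in block $1$ and the $\tau$-smallest variable outside block $1$ lies in block $c$, and reorder the variables within each block so that they decrease; this affects neither the hypotheses nor the desired conclusion.

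Then I would read off the reduced Gröbner basis of $I$. Because $I$ is graded, this basis is homogeneous, and it contains exactly one element $f_i$ for each minimal generator $x_{i1}\cdots x_{id_i}$ of $\init(I)=I_\D$, with $\init(f_i)=x_{i1}\cdots x_{id_i}$ and therefore $\deg f_i=d_i$. Writing $f_i=x_{i1}\cdots x_{id_i}-\sum_M\lambda_M M$ with $\lambda_M\in\KK$, every tail monomial obeys $M<_\tau x_{i1}\cdots x_{id_i}$; and since the basis is reduced, no generator $x_{j1}\cdots x_{jd_j}$ divides $M$, hence none divides the squarefree part of $M$, so $\supp M$ is a face of $\D$. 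This is precisely the setup preceding Lemma \ref{lem:ciGBSupport}, so that lemma applies: the point $[1:0:\cdots:0]$ (first coordinate $x_{11}$) is a singular point of $\Proj S/I$, contradicting nonsingularity. Hence $\D$ is not Gröbner-smoothable.

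I do not expect a genuine obstacle here: all the real content is already packaged in Lemma \ref{lem:ciGBSupport}. The only mild care needed is the reduction to the normalized variable order \eqref{eq:orderOfVariablesAssumption} by relabeling, and the standard Gröbner-basis bookkeeping --- that the reduced basis of a graded ideal is homogeneous, has leading terms equal to the minimal generators of $\init(I)$, and has all tail monomials supported on faces of $\D$.
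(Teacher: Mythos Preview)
Your proposal is correct and follows exactly the paper's intended route: the corollary is stated in the paper without proof because it is an immediate consequence of Lemma~\ref{lem:ciGBSupport}, and you have spelled out precisely the routine bookkeeping (relabeling to achieve \eqref{eq:orderOfVariablesAssumption}, taking the reduced homogeneous Gröbner basis, and noting that tail monomials are supported on faces of $\D$) needed to place a hypothetical smoothing into the lemma's setup.
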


The simplicial complexes of the above corollary are particular combinatorial spheres. If Conjecture \ref{conj3} is true, no combinatorial sphere is Gröbner-smoothable. As a consequence of the results in the next subsection, this is at least true for 1-dimensional spheres. 
  
\subsection{Dimension 1}
\label{sec:ManifoldsDimnesion1}

In Proposition \ref{p:equivalent} we proved that if Conjecture \ref{conj1} is true in dimension $\leq d+1$, then Conjecture \ref{conj2} is true in dimension $\leq d$. On the other hand Conjecture \ref{conj1} is true whenever $\dim S/I \leq 2$ by Proposition \ref{p:necessary} (since a 1-dimensional simplicial complex is Cohen-Macaulay if and only if it is strongly connected), so the first instance where it is open is when $\dim S/I=3$. This justifies our next interest, that is to study Conjecture \ref{conj2} when $\dim S/I=2$. In this case, Conjecture \ref{conj2} says that if $I\subset S$ defines a smooth projective curve $C$ and $\init(I)$ is square-free for some monomial order, then the genus of $C$ must be zero. While the most general case remains open, in this section we provide some evidence for it.

\subsubsection{Elliptic curves over real number fields}
In the proof of the next theorem we use the result of Elkies \cite{Elk87} which states that an elliptic curve over the rational numbers has infinitely many supersingular primes.

\begin{theorem}
  \label{t:elliptic}
If $I\subset \mathbb{Q}[x_1,\ldots ,x_n]$ defines a nonsingular projective curve of genus one, then $\init(I)$ is not square-free for any monomial order.
\end{theorem}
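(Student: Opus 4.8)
The plan is to reduce to an arithmetic statement over $\mathbb{Q}$ and then invoke Elkies' theorem on supersingular primes. Suppose for contradiction that $I\subset\mathbb{Q}[x_1,\dots,x_n]$ defines a nonsingular projective curve $C$ of genus one and that $\init(I)=I_\Delta$ is square-free for some monomial order. By Proposition~\ref{p:necessary}(iii) the complex $\Delta$ is Buchsbaum, hence (being $1$-dimensional and connected, as $S/I$ is a domain) it is Cohen-Macaulay; by Theorem~\ref{CV}(i) and Hochster's formula, the $a$-invariant comparison gives $H^1_\mm(S/I)_0\cong\widetilde H^1(\Delta;\mathbb{Q})$, and since $C$ has genus one this cohomology is nonzero, i.e. $\Delta$ is a $1$-cycle (topologically a graph homotopy equivalent to a wedge of circles, in fact with $\widetilde H^1$ one-dimensional over $\mathbb{Q}$). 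The key structural input I want to extract is that $\dim S/\init(I)=2$, $S/\init(I)$ is Cohen-Macaulay, and its $a$-invariant is $0$, i.e. $H^1(C,\mathcal{O}_C)\neq 0$ — which of course we already knew — but now realized as a Gröbner degeneration.

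The heart of the argument is a spreading-out and reduction-mod-$p$ step. Choose a finitely generated $\mathbb{Z}$-subalgebra $A\subset\mathbb{Q}$ (a localization of $\mathbb{Z}$) over which $I$ is defined, the Gröbner basis computation is valid, $C$ spreads out to a smooth proper relative curve $\mathcal{C}\to\Spec A$ of genus one, and $\init(I)$ stays square-free. For each prime $p$ not inverted in $A$, reduction modulo $p$ gives a smooth projective genus-one curve $C_p$ over $\mathbb{F}_p$ whose defining ideal still has square-free initial ideal $I_\Delta$ for the same monomial order. Now apply the positive-characteristic theory: $\mathbb{F}_p[\Delta]=S_{\mathbb{F}_p}/\init(I)$ is $F$-injective because it is a square-free monomial ring, hence by Theorem~\ref{CV}(i) and the comparison of Frobenius actions on local cohomology (as in the proof of Corollary~\ref{c:degrevlex}(iii), via \cite[Theorem 2.1]{CH97}), $S_{\mathbb{F}_p}/I$ is $F$-injective. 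Translating to the curve, this forces $C_p$ to be $F$-injective, which for a genus-one curve means the Frobenius (Hasse–Witt) action on $H^1(C_p,\mathcal{O}_{C_p})$ is injective, i.e. $C_p$ is \emph{ordinary}. Thus $C_p$ is ordinary for every prime $p$ outside a finite set.

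The contradiction then comes from number theory: by Elkies' theorem \cite{Elk87}, an elliptic curve over $\mathbb{Q}$ has infinitely many supersingular primes, and the same conclusion extends to elliptic curves over real number fields by Elkies' refinement (this is where the ``real number field'' hypothesis enters — supersingular primes are infinite for elliptic curves over fields with a real embedding). Since a genus-one curve over $\mathbb{Q}$ (or a real number field) with a rational point is an elliptic curve, and in any case its Jacobian is, and ordinarity/supersingularity of $C_p$ is governed by the reduction of the Jacobian, infinitely many primes of supersingular reduction contradicts ``$C_p$ ordinary for all but finitely many $p$.'' Hence no such $I$ exists.

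The main obstacle I anticipate is the careful handling of two points: first, making sure the Gröbner-to-$F$-injectivity implication really does descend to the statement ``$C_p$ is ordinary'' — one must check that $F$-injectivity of the affine cone $S_{\mathbb{F}_p}/I$ at the irrelevant maximal ideal is equivalent to injectivity of Frobenius on $H^1(C_p,\mathcal{O})$, which uses the standard relation between graded local cohomology of the cone and sheaf cohomology of $\Proj$ together with compatibility of the Frobenius actions. Second, the passage from ``genus-one curve'' to ``elliptic curve / its Jacobian'' and the extension of Elkies' theorem to real number fields — one needs the curve to actually have good ordinary or supersingular reduction at the relevant primes and to know that supersingular reduction of the Jacobian corresponds to non-$F$-injective reduction of $C_p$; this is where I would be most careful, and it is presumably the reason the theorem is stated only for real number fields rather than all number fields (Elkies' infinitude of supersingular primes is open in general).
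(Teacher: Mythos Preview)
Your proposal is correct and follows essentially the same approach as the paper: spread out and reduce mod $p$, use that Stanley--Reisner rings are $F$-injective and Cohen--Macaulay to conclude via \cite[Theorem~2.1]{CH97} that $S_p/I_p$ is $F$-injective (hence $C_p$ ordinary) for all $p\gg 0$, and contradict Elkies' theorem on the infinitude of supersingular primes. The paper's version is slightly leaner (it omits your $a$-invariant discussion and uses Proposition~\ref{p:necessary}(i) rather than (iii) to get Cohen--Macaulayness of $\Delta$), but the logical skeleton is identical.
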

\begin{proof}
Given a prime number $p$, we will denote by $S_p$ the polynomial ring $\ZZ/p\ZZ[x_1,\ldots ,x_n]$. Also, for an ideal $J\subset S=\mathbb{Q}[x_1,\ldots ,x_n]$ we write $J_p$ for the ideal $J'S_p\subset S_p$, where $J'=J\cap \ZZ[x_1,\ldots ,x_n]$.

Let $E\subset \PP^{n-1}$ be the nonsingular projective curve of genus 1 defined by $I\subset S$. Of course $I_p$ defines $E_p$, its reduction mod $p$, for all prime numbers $p$. Since any nonsingular projective curve of genus one can be embedded as an elliptic curve in $\PP^2$, there exist infinitely many supersingular primes $p$ for $E$ by \cite[Theorem 1]{Elk87}. This means that the Frobenius does not act injectively on $H^1(E_p,\mathcal{O}_{E_p})$ for infinitely many primes $p$. Since $H^1(E_p,\mathcal{O}_{E_p})\cong H_{\mm_p}^2(S_p/I_p)_0$, this implies
\begin{equation}
  \label{eq:ElkiesConclusionOnFinjective}
  \textup{$S_p/I_p$ is not $F$-injective for infinitely many primes $p$.}
\end{equation}

Suppose by contradiction that $\init(I)$ is square-free for some monomial order. So $\init(I)=I_{\Delta}$ for some 1-dimensional simplicial complex $\Delta$. By Proposition \ref{p:necessary} (i) $\Delta$ must be a connected 1-dimensional simplicial complex, thus $\D$ is shellable, and thus $S/I_{\Delta}$ is Cohen-Macaulay. Looking at the Buchberger algorithm, it is easy to realise that $\init(I_p)=(I_{\Delta})_p$ for all prime numbers $p\gg 0$. So $S_p/\init(I_p)$ is $F$-injective and Cohen-Macaulay for all $p\gg 0$; therefore $S_p/I_p$ is $F$-injective by \cite[Theorem 2.1]{CH97} for all $p\gg 0$ as well, a contradiction to (\ref{eq:ElkiesConclusionOnFinjective}).
 
\end{proof}

\begin{remark}
  \label{rem:extendToNumberfields}
With the appropriate notion of reduction mod $p$, the proof of Theorem \ref{t:elliptic} works replacing $\mathbb{Q}$ with any number field $\KK\subset \mathbb{R}$ by \cite{Elk89}.
The proof fails however on a general field of characteristic zero. For example, all the reductions mod $p$ of the ring $\mathbb{Q}(\pi)[x,y,z]/(x^3+y^3+z^3+\pi\cdot xyz)$ are $F$-injective for all prime numbers $p$, thus (\ref{eq:ElkiesConclusionOnFinjective}) fails. 
\end{remark}

\subsubsection{Leafless 1-dimensional  complexes}

Theorem \ref{t:elliptic} together with Remark \ref{rem:extendToNumberfields}  say that a graph with one cycle is not Gröbner-smoothable over a real number field.
We extend this to arbitrary fields and any number of cycles, with the extra assumption that the simplicial complex involved has no leafs. By a leaf we mean a vertex which is contained in exactly one maximal face. We call a simplicial complex \emph{leafless} if it has no leafs. 

\begin{remark}
  Recall that a \emph{free face} of a pure $d$-dimensional simplicial complex is a face of dimension $d-1$ contained in only one maximal face. The notion of pure simplicial complex with no free faces coincides with the notion of leafless pure simplicial complex only in dimension 1. In higher dimension, leafless  is implied by the absence of free faces.

\end{remark}

We start with the setup. 
Since $I\subset S=\KK[x_1,\ldots, x_n]$ is a homogeneous prime ideal such that $\dim S/I=2$ and $\init(I)=I_{\D}$ for some monomial order, then by Proposition \ref{p:necessary} one has that  $\D$ is a 1-dimensional, connected simplicial complex on $[n]$, i.e., $\D$ is a connected graph on $n$ vertices. Moreover, from now on we will assume that $I$ does not contain linear forms (harmless for our goals), that is,  $\{i\}\in\D$ for every $i\in [n]$.  Let  $\{f_F~|~F \textup{~is a minimal non-face of~}\D\}$ be a homogeneous reduced Gröbner basis of $I_{\Delta}$ with respect to some monomial order $\tau$, such that 
 \begin{equation}
   \init_\tau f_{F} = \bfx_F\textup{~for every minimal~}F\notin\D.
 \end{equation}
 We also use the notation
 \[
   d_F:=\deg f_F \textup{~for every minimal~}F\notin\D.
 \]
 As $\D$ is 1-dimensional, $d_F$ is either two or three. We may and will relabel the vertices of $\D$ in such a way that $\link_\D 1 = \{2,\dots,r\}$ (we abuse here notation and write just $2$ for the face $\{2\}$ and so on). For the fixed monomial order $\tau$  we may always assume without loss of generality that we have the following inequalities:
 \begin{equation}
   \label{eq:orderOfVariablesAssumption2}
  \begin{array}{rclll}
    x_{1}&>&x_{i}&\forall \ i>1,\\
    x_{2}&>&\dots>&x_r.&\\
  \end{array}
\end{equation}
\begin{remark}
  \label{claim1}
  If $\{1,l\}\notin\D$, then  $x_1^{d_F-1}x_{l}\notin\supp{f_F}$ and $x_1^{d_F}\notin\supp{f_F}$ for every minimal non-face $F$. 
\end{remark}
\begin{proof}
  This follows directly from the fact that $x_1$ is the highest variable, and the Gröbner basis is reduced.
\end{proof}

\begin{lemma}
   \label{lem:x1x2notInSupport}
   For $1<a\le \min\{r,3\}$, we have that
   \begin{equation}
     \label{eq:x1x2}
     x_1^{d_F-1}x_a\notin\supp f_{F}, \textup{~for all minimal non-faces with~} 1\notin F.
   \end{equation}
 \end{lemma}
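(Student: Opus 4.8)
The plan is to argue by contradiction, singling out a minimal counterexample and then tracking one specific monomial through a Gröbner reduction. Suppose the statement fails, and among all pairs $(F,a)$ with $F$ a minimal non-face, $1\notin F$, $1<a\le\min\{r,3\}$ and $x_1^{d_F-1}x_a\in\supp f_F$, choose one for which $\bfx_F$ is smallest with respect to the fixed order $\tau$. Write $f_F=\bfx_F-\sum c_N N$, the sum running over degree-$d_F$ face monomials $N<_\tau\bfx_F$, and put $\la:=c_M\ne0$ for $M:=x_1^{d_F-1}x_a$. The monomial I will follow is $u:=x_1^{d_F}x_a=x_1\cdot M$.

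First I would record a combinatorial constraint on $F$. Since $1\notin F$, assumption \eqref{eq:orderOfVariablesAssumption2} gives $x_1>_\tau x_v$ for every $v\in F$; for a fixed $v_0\in F$, multiplying the $d_F-1$ inequalities $x_1>_\tau x_v$ ($v\in F\setminus\{v_0\}$) and then by $x_a$, and comparing with $x_1^{d_F-1}x_a<_\tau\bfx_F=x_{v_0}\prod_{v\in F\setminus\{v_0\}}x_v$, forces after cancellation $x_a<_\tau x_{v_0}$. Hence $x_v>_\tau x_a$ for every $v\in F$, so (as $x_2>_\tau\cdots>_\tau x_r$ are the variables of $\link_\D 1$) one gets $F\cap\link_\D 1\subseteq\{2,\dots,a-1\}$. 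In particular there is always a vertex $v\in F$ with $\{1,v\}\notin\D$ — all of them when $a=2$, and at least one besides the vertex $2$ when $a=3$ since $|F|\ge2$ — and such a $v$ lies in $\{r+1,\dots,n\}$, so $\init_\tau f_{\{1,v\}}=x_1x_v$. Fixing such a $v$, I form the $S$-polynomial $g:=x_1f_F-(\bfx_F/x_v)f_{\{1,v\}}\in I$. Both $x_1f_F$ and $(\bfx_F/x_v)f_{\{1,v\}}$ have leading term $x_1\bfx_F$ with coefficient $1$, so every monomial of $g$ has degree $d_F+1$ and is $<_\tau x_1\bfx_F$. A short inspection shows $u$ occurs in $g$ with coefficient exactly $-\la\ne0$ (it cannot come from $(\bfx_F/x_v)f_{\{1,v\}}$ because $\bfx_F/x_v$ is a product of variables $x_w$ with $w\in F$, each $\ne x_1$ and each $>_\tau x_a$, so $\bfx_F/x_v\nmid u$); and, decisively, $u$ is supported on $\{1,a\}$, which is a \emph{face} of $\D$ since $a\le r$, hence $u\notin I_\D=\init(I)$.

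The core step is then to reduce $g$ to its normal form $0$ modulo the reduced Gröbner basis and to verify that the coefficient of $u$ stays $-\la$. Because $u\notin\init(I)$, the monomial $u$ is never a leading term, so it is never the monomial acted on by a reduction step; its coefficient can only change if $u$ is \emph{created}, i.e. if, reducing a monomial $m$ (with $\bfx_G\mid m$, $m<_\tau x_1\bfx_F$) by some $f_G$, one has $u=(m/\bfx_G)M'$ for a non-leading monomial $M'\in\supp f_G$. Then $M'\mid u=x_1^{d_F}x_a$, and since $x_1^{d_G}\notin\supp f_G$ by Remark~\ref{claim1}, necessarily $M'=x_1^{d_G-1}x_a$; so $m=x_1^{d_F-d_G+1}\bfx_G$, and $m<_\tau x_1\bfx_F$ then yields $\bfx_G<_\tau\bfx_F$. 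I would split into three cases. If $1\notin G$, then $(G,a)$ is another counterexample with smaller leading monomial, contradicting minimality. If $G=\{1,p,q\}$ with $p\ne q$ in $\{2,\dots,r\}$, then $x_1^2x_a=M'\in\supp f_G$ would force $x_1x_a<_\tau x_px_q$, which is impossible: writing the smaller of $p,q$ first one has $x_1>_\tau x_p$ and $x_q\le_\tau x_a$ (this is exactly where $a\le 3$ is used, via $x_2>_\tau\cdots>_\tau x_r$), so $x_1x_a>_\tau x_px_q$. If $G=\{1,w\}$ with $w>r$, then $m=x_1^{d_F}x_w$; but a monomial of this shape — supported on the non-face $\{1,w\}$ with $x_w$-exponent $1$ — is neither in $\supp g$ nor ever produced during the reduction (a "creator" $M''\in\supp f_{G'}\setminus\{\bfx_{G'}\}$ would be supported on a face contained in $\{1,w\}$, hence would be $x_1^{d_{G'}}$, excluded by Remark~\ref{claim1}, or $x_w^{d_{G'}}$ with $d_{G'}\ge2$, impossible), so $f_G$ is never applied to it. Therefore $u$ can never be created, so it survives in the normal form of $g$ with coefficient $-\la\ne0$, contradicting $g\in I$.

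I expect the main obstacle to be precisely this last bookkeeping: making rigorous that the reduction of $g$ never re-introduces the critical monomial $u$, and, as part of it, controlling the auxiliary monomials $x_1^{d_F}x_w$ appearing in the third case. The combinatorial constraint on $F$, the construction of $g$, and the various comparisons among the $x_i$ are routine; and the degree bound $d_F,d_G\le3$ forced by $\D$ being one-dimensional should be exactly what keeps the list of monomials supported on $\{1,a\}$ (respectively $\{1,w\}$) short enough for the case analysis to terminate.
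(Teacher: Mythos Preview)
Your strategy is the same as the paper's: form the $S$-pair of $f_F$ with $f_{\{1,v\}}$ and track the face monomial $u=x_1^{d_F}x_a$ through a standard representation (equivalently, a reduction) in which every term stays below $x_1\bfx_F$. Your second and third cases, handling the non-faces $G$ with $1\in G$, are exactly Claim~2 in the paper and are correct.

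The gap is in your first case, at the sentence ``$m<_\tau x_1\bfx_F$ then yields $\bfx_G<_\tau\bfx_F$''. From $m=x_1^{d_F-d_G+1}\bfx_G$ this inequality gives $x_1^{d_F-d_G}\bfx_G<_\tau\bfx_F$, which indeed forces $\bfx_G<_\tau\bfx_F$ when $d_F\ge d_G$. But when $d_F=2$ and $d_G=3$ (a triangle of $\Delta$ not containing $1$) you only get $m=\bfx_G<_\tau x_1\bfx_F$, and nothing prevents $\bfx_F<_\tau\bfx_G<_\tau x_1\bfx_F$; your minimality hypothesis on $\bfx_F$ then does not exclude the pair $(G,a)$, so the creation of $u$ via $f_G$ is not yet ruled out. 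This is precisely why the paper does \emph{not} order the non-faces by $\bfx_{F_i}$, but by the ``padded'' monomial $x_1^{3-d_i}\bfx_{F_i}$. If you replace your minimality criterion by ``$x_1^{3-d_F}\bfx_F$ smallest'', the inference goes through uniformly: for $d_F=2$ the inequality $m<_\tau x_1\bfx_F$ reads $x_1^{3-d_G}\bfx_G<_\tau x_1^{3-d_F}\bfx_F$ directly, and for $d_F=3$ one cancels one factor of $x_1$ to reach the same conclusion. With this single adjustment your argument is complete and essentially identical to the paper's.
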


 \begin{proof}
   We label the minimal non-faces of $\D$ not containing $1$ by $F_1,\dots,F_s$, denote the corresponding degrees by $d_1,\dots,d_s$, and assume that
   \begin{equation}
     \label{eq:inductiveOrderFi}
     \textup{if~} 1\le i<j\le s,\textup{~then~} x_1^{3-d_i}\init_\tau f_{F_i}< x_1^{3-d_j}\init_\tau f_{F_j}.      
   \end{equation}
   We will prove \eqref{eq:x1x2} inductively, that is, we assume that \eqref{eq:x1x2} holds for $F_1,\dots,F_{k-1}$ and prove that it also holds for $F_k$.
   
   Let $a=2$ and assume that $x_1^{d_k-1}x_2\in\supp f_{F_k}$. This implies that $x_1^{d_k-1}x_2<\bfx_F$, so that $x_2< x_i$ for all $i\in F_k$. Since $x_2< x_i$ implies $i>r$ by (\ref{eq:orderOfVariablesAssumption2}), it follows that $\{1,i\}\notin\D$ for every $i\in F_k$. Let $i_k$ be one of the vertices in $F_k$ and build the S-polynomial
   \[
     S(f_{\{1,i_k\}},f_{F_k}) = \bfx_{F_k\setminus i_k} f_{\{1,i_k\}} - x_1f_{F_k}.
   \]
   As we are dealing with a Gröbner basis, we must have
   \begin{equation}
     \label{eq:rewriteGB}
     S(f_{\{1,i_k\}},f_{F_k}) = \sum g_F\cdot f_F,\textup{~with~}\init (g_F\cdot f_F) < x_1\bfx_{F_k}.
   \end{equation}
   The highest exponent of $x_1$ in a monomial in the support of $\bfx_{F_k\setminus i_k}f_{\{1,i_k\}}$ is 1. So, because $x_1^{d_k}x_2\in\supp x_1f_{F_k}$ with $d_k>1$,  we have $x_1^{d_k}x_2\in\supp  S(f_{\{1,i_k\}},f_{F_k})$. Thus, there must appear  some non-face $\alF$ on the right hand side of (\ref{eq:rewriteGB}), such that
   \begin{equation}
     \label{eq:x1x2inSuppGF}
     x_1^{d_k}x_2\in\supp g_{\alF}f_{\alF}.     
   \end{equation}
   \begin{claim}
     \label{claim2}
     For every $\alF$ as above we have $1\notin \alF$.  
   \end{claim}
   \noindent{\it Proof of Claim \ref{claim2}:}
We assume that $1\in\alF$ and seek a contradiction. There are two cases:\\
   If $|\alF| = 3$, then let $\alF=\{1,l,m\}$, so $l,m\in\link_\D1$. In particular $x_{l}\le x_2$ and $x_{m}\le x_2$, so $x_1^2x_2>x_1x_{l}x_{m} = \init(f_{\alF})$. This implies $\init(g_{\alF}f_{\alF})< x_1^{d_k}x_2$, so $x_1^{d_k}x_2\notin\supp g_{\alF}f_{\alF}$, which is a contradiction to (\ref{eq:x1x2inSuppGF}). \\
   If $|\alF|=2$, then let $\alF=\{1,l\}$. Because $x_1^2\notin\supp f_{\alF}$ we must have by (\ref{eq:x1x2inSuppGF}) that $x_1^{d_k-1}\in\supp g_{\alF}$, thus also that $x_1^{d_k}x_{l}\in\supp g_{\alF}f_{\alF}$. 
   By Remark \ref{claim1}, we have that  $x_1^{d_k}x_{l}\notin\supp  S(f_{\{1,i_k\}},f_{F_k})$,  so $x_1^{d_k}x_{l}$ must cancel out with some monomial in $\supp(g_Ff_F)$ for some $F$. But, again by Remark \ref{claim1}, we have $x_1^{d_k}x_l\notin\supp(g_Ff_F)$ for every $F$ and we also obtain a contradiction. \hfill {\tiny $_\blacksquare$} \\

    By Claim \ref{claim2} we have $\alF=F_i$ for some $i\in\{1,\dots,s\}$. 
    As $x_1^{d_{\alF}}\notin\supp f_{\alF}$, we must have $x_1^{d_{\alF}-1}x_2\in\supp f_{\alF}$. 
    By the inductive hypothesis we must have that $i\ge k$.
    From (\ref{eq:inductiveOrderFi}) it follows that $x_1^{3-d_0}\init f_{\alF}\ge x_1^{3-d_k}\init f_{F_k}$.
    Thus we get  $\init_\tau g_{\alF}f_{\alF}\ge x_1\bfx_{F_k}$, which is a contradiction to \eqref{eq:rewriteGB}.

   When $a=3$, the only change in the proof is in the first lines:  From $x_3<x_i$ for $i\in F_k$ it follows that there exists $j\in F_k$ with $\{1,j\}\notin\D$ (for $x_2$ we had this for any $i\in F_k$). The rest of the proof runs analogously.
 \end{proof}

 The proof of Lemma \ref{lem:x1x2notInSupport} no longer works  for the third-largest variable in the link of $1$, namely $x_1x_4 \in\supp f_{\{2,3\}}$ would not lead to a similar contradiction. A straight-forward generalisation of the argument in higher dimension is also not possible, because the link of 1 is no longer a 0-dimensional complex, so we may no longer assume that $x_2>\dots >x_r$. So Lemma \ref{lem:x1x2notInSupport} is the best possible result using this type of arguments related to monomial orders and reduced Gröbner bases. 
 \begin{remark}
   \label{rem:deg3gens}
   For a minimal non-face $F$ with $|F|=3$ and $1\in F$, we have
   \[
     x_1^2x_2,x_1^2x_3\notin \supp f_F.
   \]
 \end{remark}
 \begin{proof}
    Because $F=\{1,l,m\}$ is a minimal non-face, we have $l,m\in\link_\D1$. As we chose $x_2$ and $x_3$ to be the largest two variables in the link of $1$, we have $x_2\ge x_l$ and $x_3\ge x_m$ (or $l$ and $m$ switched). Thus $x_1^2x_2>x_1^2x_3>x_1x_kx_l$.
 \end{proof}

\begin{theorem}
  \label{thm:noGBsmoothingLocDeg>1}
  Let $\D$ be a 1-dimensional simplicial complex, and fix a monomial order.
Let $I\subset S$ be homogeneous ideal with $\init I = I_\D$, and let $\{f_F~|~F \textup{~is a minimal non-face of~}\D\}$ be a reduced homogeneous Gröbner basis of $I$. If $1$ is not a leaf, then  $P_1=[1:0:\dots:0]$ is a singular point of $\Proj S/I$.
\end{theorem}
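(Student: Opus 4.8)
The plan is to write down the Jacobian of the generating set $\{f_F\}$ at $P_1$ and invoke the Jacobian criterion. First I would check that $P_1\in\Proj S/I$: recalling from the setup that every singleton is a face, so $d_F\ge 2$, and that $x_1$ is the $\tau$-largest variable, the monomial $x_1^{d_F}$ is the $\tau$-largest monomial of degree $d_F$ and differs from $\init_\tau f_F=\bfx_F$ (which is squarefree of degree $\ge 2$); hence $x_1^{d_F}\notin\supp f_F$, so $f_F(P_1)=0$ for every minimal non-face $F$. Dehomogenising in the chart $x_1=1$, the Jacobian matrix $J$ of $\{f_F\}$ at $P_1$ has rows indexed by the minimal non-faces $F$ of $\D$, columns indexed by $j\in\{2,\dots,n\}$, and entry in row $F$ and column $j$ equal to the coefficient of $x_1^{d_F-1}x_j$ in $f_F$. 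Since $\dim S/I=\dim S/I_\D=2$, we have $\dim\Proj S/I=1$, so smoothness of $\Proj S/I$ at $P_1$ would force $\rank J\ge n-2$; hence it suffices to prove $\rank J\le n-3$.

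Next I would locate the nonzero columns of $J$ using the three facts already established in this subsection. By Remark~\ref{claim1}, the $j$-th column of $J$ is zero whenever $\{1,j\}\notin\D$, that is, for every $j\in\{r+1,\dots,n\}$. By Lemma~\ref{lem:x1x2notInSupport} applied with $a=2$ and with $a=3$ — this is the only place the hypothesis is used, since for a graph ``$1$ is not a leaf'' means exactly that $\link_\D 1$ has at least two vertices, i.e. $r\ge 3$, which is precisely what is needed for the $a=3$ instance — together with Remark~\ref{rem:deg3gens}, the entries of $J$ in columns $2$ and $3$ vanish in every row except possibly the rows indexed by minimal non-faces of the shape $\{1,l\}$; and such an $F$, being a non-face, satisfies $l\notin\{2,\dots,r\}$, hence $l>r$.

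Then a short case analysis on $r$ finishes the proof. If $r\le n-2$ then at least two columns of $J$ (those indexed by $r+1,\dots,n$) vanish, so $\rank J\le(n-1)-2=n-3$. If $r=n$ then there is no index $l>r$, so columns $2$ and $3$ of $J$ vanish identically, and again $\rank J\le n-3$. In the remaining case $r=n-1$, the $n$-th column of $J$ vanishes while, by the previous paragraph, columns $2$ and $3$ of $J$ are both supported on the single row indexed by $\{1,n\}$, so they span a space of dimension at most $1$; consequently $\rank J\le 1+(n-4)=n-3$, where the summand $n-4$ counts the columns indexed by $\{4,\dots,n-1\}$. In every case $\rank J\le n-3<n-2$, so by the Jacobian criterion $P_1$ is a singular point of $\Proj S/I$.

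The one step that needs attention is the case $r=n-1$: one must observe that columns $2$ and $3$ are supported on the \emph{same} row, so that even when both are nonzero they jointly drop the rank by one rather than by nothing. The other cases, and the reduction to the Jacobian criterion, are routine combinations of Remark~\ref{claim1}, Lemma~\ref{lem:x1x2notInSupport} and Remark~\ref{rem:deg3gens}.
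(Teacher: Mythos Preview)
Your argument has a gap in the claim that ``the $j$-th column of $J$ is zero whenever $\{1,j\}\notin\D$''. Remark~\ref{claim1} is proved via reducedness of the Gr\"obner basis, which only controls the \emph{non-leading} monomials of each $f_F$; it does not exclude the leading term. Concretely, for $j>r$ take $F=\{1,j\}$: the entry of $J$ in row $\{1,j\}$ and column $j$ is the coefficient of $x_1x_j=\init_\tau f_{\{1,j\}}$ in $f_{\{1,j\}}$, which equals $1$. So the columns indexed by $r+1,\dots,n$ are \emph{not} zero, and your case $r\le n-2$ collapses (the same oversight invalidates the claim that column $n$ vanishes when $r=n-1$).

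The repair is close to what you already do for columns $2$ and $3$, and in fact eliminates the case split. The correct reading of Remark~\ref{claim1} is that for each $j>r$ the $j$-th column of $J$ is supported only on the row $\{1,j\}$. Combined with your (correct) observation from Lemma~\ref{lem:x1x2notInSupport} and Remark~\ref{rem:deg3gens} that columns $2$ and $3$ are supported only on the rows $\{1,l\}$ with $l>r$, all $n-r+2$ columns indexed by $2,3,r+1,\dots,n$ have their nonzero entries confined to those same $n-r$ rows, hence together span a space of dimension at most $n-r$. The remaining $r-3$ columns (indexed by $4,\dots,r$) contribute at most $r-3$, and $\rank J\le (n-r)+(r-3)=n-3$. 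This is exactly the paper's proof, phrased column-wise: the paper splits the \emph{rows} into the block $B_1$ of the $n-r$ rows $\{1,j\}$ with $j>r$ (rank $n-r$, via the identity submatrix in columns $r+1,\dots,n$) and the complementary block $B_2$, in which columns $1,2,3,r+1,\dots,n$ all vanish, so $\rank B_2\le r-3$.
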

\begin{proof}
  Clearly $x_1^{d_F}$ is not in the support of any $f_F$, which implies that $P_1 \in \Proj S/I$.   Because $1$ is not a leaf we have $r\ge 3$, where $\{2,\dots,r\}=\link_\D 1$. The Jacobian $\Jac(I)$ can be split in two blocks with full rows as follows:
  \[
  \begin{array}{l}
     B_1 := \textup{ the~} n-r \textup{~rows corresponding to the degree 2 generators } f_F \textup{~with }x_1|\bfx_F.  \\
     B_2 := \textup{ the rows corresponding to the ~}f_F\textup{~ without~}x_1^{d_F-1}x_2 \textup{~and~}x_1^{d_F-1}x_3\textup{~in their support.}
  \end{array}
  \]
  When substituting $P_1$ in the Jacobian,  the columns indexed by $x_{r+1},\dots,x_n$ produce an identity submatrix in $B_1(P_1)$, so 
  \[
    \rank B_1(P_1)=n-r.
  \]
  In $B_2(P_1)$, the columns indexed by $x_{r+1},\dots,x_n$ are all zero because the Gröbner basis is reduced. The first column is zero because $x_1^{d_F}\notin\supp f_F$ for any minimal non-face $F$. By  Lemma \ref{lem:x1x2notInSupport} and Remark \ref{rem:deg3gens} the columns indexed by $x_2$ and $x_3$ are also zero in $B_2(P_1)$. This leaves at most $r-3$ nonzero columns in $B_2(P_1)$, thus
   \[
    \rank B_2(P_1)\le r-3.
  \]
  This means that $\rank\Jac(P_1)\le n-r + r-3 = n-3 < \codim_{\PP^{n-1}} \Proj S/I$, so $P_1$ is a singular point.
\end{proof}

\begin{corollary}
  \label{cor:leaflessGraphsNotGRSmoothable}
A graph which is Gröbner-smoothable over some field must have leafs. In particular, a cycle is not Gröbner-smoothable over any field.
\end{corollary}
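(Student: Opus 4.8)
The plan is to obtain Corollary~\ref{cor:leaflessGraphsNotGRSmoothable} as an essentially immediate consequence of Theorem~\ref{thm:noGBsmoothingLocDeg>1}. The point is that in a leafless graph \emph{every} vertex can be put in the role of the vertex ``$1$'' of that theorem, so a Gröbner-smoothable leafless graph would force a singular point on $\Proj S/I$, contradicting nonsingularity.

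First I would set up the contradiction. Assume $\D$ is a leafless graph which is Gröbner-smoothable over $\KK$, and pick a homogeneous prime ideal $I\subset S=\KK[x_1,\dots ,x_n]$ with $\Proj S/I$ nonsingular and $\init(I)=I_\D$ for some monomial order $\tau$; since a nonempty $1$-dimensional complex has $\dim\KK[\D]=2$, we are in the case $\dim S/I=2$. Then I would carry out the two harmless normalisations already used in this subsection. If $\{i\}\notin\D$ for some $i\in[n]$, then $x_i\in I_\D=\init(I)$, so $I$ contains a linear form whose leading term is $x_i$; eliminating $x_i$ with this linear form changes neither $\Proj S/I$ nor the fact that $\init(I)$ is the Stanley--Reisner ideal of $\D$ on its actual vertex set, so I may assume $\{i\}\in\D$ for all $i$. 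Since $S/I$ is a domain, Proposition~\ref{p:necessary}(i) shows $\D$ is strongly connected, i.e. a connected graph on $[n]$. Finally I would relabel the vertices so that $x_1$ is the $\tau$-largest variable and $\link_\D 1=\{2,\dots ,r\}$ with $x_2>\dots >x_r$, matching the assumptions~\eqref{eq:orderOfVariablesAssumption2}, and fix a reduced homogeneous Gröbner basis $\{f_F\}$ of $I$ indexed by the minimal non-faces, with $\init f_F=\bfx_F$.

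Next I would unwind the word ``leaf''. A vertex of a graph is a leaf exactly when it lies in at most one edge (an isolated vertex lies in the unique maximal face $\{v\}$, and a degree-one vertex in a unique edge), so $\D$ being leafless means every vertex has degree at least two; in particular the chosen vertex $1$ is not a leaf and $r\ge 3$. Thus all hypotheses of Theorem~\ref{thm:noGBsmoothingLocDeg>1} hold, and it yields that $P_1=[1:0:\dots :0]$ is a singular point of $\Proj S/I$ --- contradicting nonsingularity. Hence any Gröbner-smoothable graph has a leaf. For the last assertion: every vertex of a cycle has degree two, so a cycle is leafless and therefore not Gröbner-smoothable over any field.

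I do not expect a genuine obstacle, since the statement is a repackaging of Theorem~\ref{thm:noGBsmoothingLocDeg>1}; the only steps needing (routine) care are checking that the two normalisations --- deleting the variables $x_i$ with $\{i\}\notin\D$, and relabelling the $\tau$-largest variable as $x_1$ --- keep $\init(I)$ equal to a $1$-dimensional Stanley--Reisner ideal and keep $\Proj S/I$ nonsingular, and verifying that ``$\D$ leafless'' is literally the statement that every vertex satisfies the hypothesis ``$1$ is not a leaf'' of the theorem.
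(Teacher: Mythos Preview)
Your proposal is correct and matches the paper's intent: the corollary is stated without proof because it is an immediate consequence of Theorem~\ref{thm:noGBsmoothingLocDeg>1} together with the normalisations already set up in the subsection, exactly as you describe. One small clarification of your opening sentence: you do not need that \emph{every} vertex can play the role of ``$1$''; the vertex ``$1$'' is forced to be the $\tau$-largest variable by the standing assumption~\eqref{eq:orderOfVariablesAssumption2}, and the leafless hypothesis guarantees that this particular vertex is not a leaf---your actual argument gets this right.
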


\begin{remark}\label{r:lex}
For lexicographic monomial orders the above arguments are much easier and work in any dimension: If $\tau$ is the lexicographic order corresponding to $x_1>\dots>x_n$, we have that if $x_1| M$ with $M\in\supp f_F$, then $x_1| \bfx_F$. Furthermore, $x_1^2$ does not divide any monomial in the support of any $f_F$. 
So $(\frac{\partial f_F}{\partial x_i}(P_1))_{i=1,\dots,n} \neq (0,\dots,0)$ if and only if $F=\{1,j\}$ where $\{1,j\}\notin \Delta$. This means $P_1=[1:0,\ldots ,0]$ can be a smooth point of $\Proj(S/I)$ only if $|\link_{\Delta}1|\leq \dim \Delta$. This leads to the next result.
\end{remark}

\begin{proposition}
  \label{prop:noGroebnerLexSmoothings}
 In any dimension, a leafless simplicial complex $\Delta$  is not Gröbner-smoothable with respect to any lexicographic order. In particular, pseudo-manifolds are not lexicographically Gröbner-smoothable.
\end{proposition}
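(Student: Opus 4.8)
The goal is to show that a leafless simplicial complex $\D$ on $[n]$ cannot arise as $\init_\tau(I)$ for a homogeneous prime $I\subset S$ with $\Proj S/I$ nonsingular, when $\tau$ is lexicographic. I would reduce everything to the pointwise Jacobian criterion at the distinguished coordinate point, exactly as in Remark \ref{r:lex}. So fix the lex order with $x_1>\dots>x_n$, let $\{f_F\}_{F}$ be the reduced homogeneous Gröbner basis of $I$ with $\init_\tau f_F=\bfx_F$ ranging over minimal non-faces $F$, and consider $P_1=[1:0:\dots:0]$. As usual, since $x_1^{d_F}$ is the lex-largest monomial of its degree it never appears in $\supp f_F$, hence $P_1\in \Proj S/I$.

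**Key steps.** First I would record the two elementary lex facts from Remark \ref{r:lex}: (a) if a monomial $M\in\supp f_F$ is divisible by $x_1$, then $x_1\mid \bfx_F$, i.e.\ $1\in F$; and (b) $x_1^2\nmid M$ for any $M\in\supp f_F$ — both because $\bfx_F$ is squarefree and lex-largest in $\supp f_F$, together with reducedness. Second, I would compute which rows of $\Jac(I)$ can be nonzero when evaluated at $P_1$: the row of $f_F$ evaluated at $P_1$ survives only if some monomial of $f_F$ is of the form $x_1^{d_F-1}x_j$; by (b) this forces $d_F=2$, and by (a) it forces $F=\{1,j\}$ with $\{1,j\}\notin\D$. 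Third, for such $F=\{1,j\}$ the only surviving partial is $\partial/\partial x_j$, so the nonzero part of $\Jac(P_1)$ is supported on the columns indexed by the vertices $j$ with $\{1,j\}\notin\D$, i.e.\ the non-neighbours of $1$ (other than $1$ itself) — of which there are $n-1-|\link_\D 1|$. Hence $\rank \Jac(P_1)\le n-1-|\link_\D 1|$. Fourth, smoothness of $\Proj S/I$ at $P_1$ requires $\rank\Jac(P_1)\ge \codim_{\PP^{n-1}}\Proj S/I = n-1-\dim\Proj S/I = n-1-\dim\D$ (using that $\dim S/\init(I)=\dim S/I$, so $\D$ has the expected dimension). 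Combining, smoothness at $P_1$ forces $|\link_\D 1|\le \dim\D$. Fifth — and this is the only place leaflessness enters — in a leafless complex every vertex lies in at least two maximal faces; I would argue that this forces $|\link_\D v|\ge \dim\D+1$ for every vertex $v$ (two maximal faces through $v$ of dimension $\dim\D$, if $\D$ is pure; and one should first reduce to the pure case, or handle the non-pure case by choosing $v$ inside a top-dimensional facet — here I'd want to be a little careful, see below). That contradicts the bound from the Jacobian, so $\Proj S/I$ is singular at $P_1$: $\D$ is not Gröbner-smoothable for any lex order. Finally, a pseudo-manifold is pure and has no free faces, hence (in the sense of the earlier remark, where "no free faces" $\Rightarrow$ leafless) is leafless, giving the "in particular" clause.

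**Main obstacle.** The genuinely delicate point is the combinatorial inequality $|\link_\D v|\ge \dim\D+1$ for a leafless complex, and the interaction with non-purity. If $\D$ is pure of dimension $d$ and $v$ lies in two distinct facets $F_1,F_2$ (which happens for every $v$ since $v$ is not a leaf), then $F_1\setminus v$ and $F_2\setminus v$ are two distinct faces of $\link_\D v$ of dimension $d-1$, so $\link_\D v$ has at least $d+1$ vertices unless $F_1\setminus v = F_2\setminus v$ set-theoretically — but that would force $F_1=F_2$; so we do get $|\link_\D v|\ge d+1$, in fact $\link_\D v$ contains two distinct $(d-1)$-faces. Wait — two distinct $(d-1)$-simplices on a common vertex set need at least $d+1$ vertices, and actually could share $d-1$ of them, giving exactly $d+1$; that is enough. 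For the non-pure case one should simply pick $v$ to be a vertex of $\D$ lying in a facet of dimension exactly $\dim\D$ and contained in a second maximal face — a leaf-free complex need not literally supply this, so the cleanest route is: the Gröbner degeneration forces $S/\init(I)$ to be a domain's degeneration, hence (Proposition \ref{p:necessary}) strongly connected and in particular pure, so $\D$ is automatically pure of dimension $\dim S/I-1$ and the pure argument applies verbatim. I would phrase the proof assuming this reduction. The rest is the routine rank count already sketched in Remark \ref{r:lex}.

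\begin{proof}
Fix the lexicographic order $\tau$ with $x_1>\dots>x_n$ and suppose, for contradiction, that $\D$ is leafless and $\init_\tau(I)=I_\D$ for some homogeneous prime $I\subset S$ with $X=\Proj S/I$ nonsingular. Since $S/I$ is a domain, Proposition \ref{p:necessary}(i) gives that $\D$ is strongly connected, hence pure; set $d=\dim\D=\dim X$. Let $\{f_F\}$ be the reduced homogeneous Gröbner basis of $I$ with $\init_\tau f_F=\bfx_F$ as $F$ ranges over the minimal non-faces of $\D$, and let $d_F=\deg f_F$.

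We evaluate the Jacobian at $P_1=[1:0:\dots:0]$. Since $x_1^{d_F}$ is the $\tau$-largest monomial of degree $d_F$, it cannot lie in $\supp f_F$; thus $P_1\in X$. As in Remark \ref{r:lex}, two facts hold for every $F$: if $M\in\supp f_F$ is divisible by $x_1$ then $x_1\mid\bfx_F$, i.e.\ $1\in F$ (otherwise $M>_\tau\bfx_F$); and $x_1^2\nmid M$ for any $M\in\supp f_F$ (otherwise, again $M>_\tau\bfx_F$ since $\bfx_F$ is squarefree, contradicting reducedness or the leading term). Consequently the row of $\Jac(I)$ indexed by $f_F$, evaluated at $P_1$, is nonzero only if $\supp f_F$ contains a monomial of the form $x_1^{d_F-1}x_j$; by the second fact this forces $d_F=2$, and by the first it forces $F=\{1,j\}$ with $\{1,j\}\notin\D$. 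For such $F$, the only partial derivative not vanishing at $P_1$ is $\partial f_F/\partial x_j$. Hence the nonzero columns of $\Jac(I)(P_1)$ are among those indexed by the vertices $j$ with $\{1,j\}\notin\D$, and there are exactly $n-1-|\link_\D 1|$ such vertices. Therefore
\[
\rank \Jac(I)(P_1)\;\le\; n-1-|\link_\D 1|.
\]

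Since $X$ is nonsingular at $P_1$, the Jacobian criterion requires
\[
\rank \Jac(I)(P_1)\;\ge\;\codim_{\PP^{n-1}}X\;=\;(n-1)-\dim X\;=\;n-1-d,
\]
so $|\link_\D 1|\le d$. On the other hand, because $1$ is not a leaf, it lies in at least two distinct maximal faces $G_1,G_2$ of $\D$; by purity $|G_1|=|G_2|=d+1$, so $G_1\setminus\{1\}$ and $G_2\setminus\{1\}$ are two distinct $(d-1)$-dimensional faces of $\link_\D 1$. Two distinct $d$-element sets have at least $d+1$ elements in their union, hence $|\link_\D 1|\ge d+1$, contradicting $|\link_\D 1|\le d$. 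Thus $X$ is singular at $P_1$, and $\D$ is not Gröbner-smoothable with respect to any lexicographic order.

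For the last assertion, a pseudo-manifold is a pure complex in which every codimension-one face lies in at most (indeed exactly) two facets and which has no free faces; in dimension greater than $1$ the absence of free faces implies leaflessness, and in dimension $1$ a pseudo-manifold is a disjoint union of cycles, which is again leafless. In all cases the conclusion follows from the first part.
\end{proof}
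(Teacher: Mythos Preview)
Your argument is correct and follows exactly the route of the paper: reduce to the pure case via Proposition~\ref{p:necessary}(i), then carry out the Jacobian count at $P_1$ using the two lex facts of Remark~\ref{r:lex}, and finish with the combinatorial observation that a non-leaf vertex in a pure $d$-dimensional complex has link of size at least $d+1$.

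One small inaccuracy worth fixing: for $F=\{1,j\}$ it is \emph{not} generally true that $\partial f_F/\partial x_j$ is the only partial surviving at $P_1$; a tail monomial $x_1x_k$ with $k\in\link_\D 1$ and $k>j$ is allowed (it is lex-smaller than $x_1x_j$ and, by reducedness, lies in $\D$), and then $\partial f_F/\partial x_k(P_1)\ne 0$. So the column claim is off. The rank bound $\rank\Jac(I)(P_1)\le n-1-|\link_\D 1|$ is nonetheless valid, obtained by simply counting nonzero \emph{rows}: you already showed these are exactly the $f_{\{1,j\}}$ with $j\notin\link_\D 1\cup\{1\}$. With that one-line fix the proof goes through verbatim.
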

\begin{proof}
Since $I_\Delta$ is the initial ideal of a prime ideal, it must be strongly connected by Proposition \ref{p:necessary} (i), and thus pure. So Remark \ref{r:lex} yields the result.
\end{proof}

\bibliographystyle{amsalpha}
\bibliography{Alexbib}%
\vfill

\end{document}